\renewcommand\epsilon\varepsilon
\renewcommand\phi\varphi
\newcommand\RR{\mathbb{R}}
\newcommand\CC{\mathbb{C}}
\newcommand\ZZ{\mathbb{Z}}
\newcommand\QQ{\mathbb{Q}}
\newcommand\kArt{k\text{-}\mathbf{Art}}
\newcommand\Repr{\mathfrak{R}(\Gamma, G)}
\renewcommand\gg{\mathfrak{g}}
\newcommand\aab{\mathcal{A}^\bullet}
\newcommand\PropP{(P)}
\theoremstyle{definition}
\newtheorem{Def}{Definition}[section]
\theoremstyle{plain}
\newtheorem{Lem}[Def]{Lemma}
\newtheorem{The}[Def]{Theorem}
\newtheorem{Cor}[Def]{Corollary}
\theoremstyle{remark}
\newtheorem{Rem}[Def]{Remark}
\DeclareMathOperator\id{id}
\DeclareMathOperator\Ker{Ker}
\DeclareMathOperator\Hom{Hom}
\DeclareMathOperator\Ad{Ad}
\DeclareMathOperator\ad{ad}
\DeclareMathOperator\Gr{Gr}
\DeclareMathOperator\Obj{Obj}
\DeclareMathOperator\Iso{Iso}
\DeclareMathOperator\Dec{Dec}
\DeclareMathOperator\Alb{Alb}
\title{A criterion for quadraticity of a representation of the fundamental group
of an algebraic variety}
\author{Louis-Clément Lefèvre}
\date{\today}
\begin{document}

\maketitle

\begin{abstract}
Let $\Gamma$ be a finitely presented group and $G$ a linear algebraic group over $\mathbb{R}$.
A representation $\rho:\Gamma\rightarrow G(\mathbb{R})$ can be seen as an $\mathbb{R}$-point of the representation variety
$\mathfrak{R}(\Gamma, G)$.
It is known from the work of Goldman and Millson that if $\Gamma$ is the fundamental group of a compact Kähler manifold
and $\rho$ has image contained in a compact subgroup
then $\rho$ is analytically defined by homogeneous quadratic equations in $\mathfrak{R}(\Gamma, G)$.
When $X$ is a smooth complex algebraic variety, we study a certain criterion under which
this same conclusion holds.
\end{abstract}

\section{Introduction}
Many restrictions are known on the question of wether a given finitely presented group $\Gamma$
can be obtained as the fundamental group of a compact Kähler manifold, and some restrictions are known
for smooth complex algebraic varieties.
See \cite{ABCKT} for an introduction to these questions.
One way to study these groups is via their representations into a linear algebraic group $G$
over $\RR$: there exists a scheme $\Repr$ parametrizing such representations $\rho$
(see section~\ref{SecRepresentationVariety})
and it is sometimes possible to describe
$\rho$ as a singularity in $\Repr$, up to analytic isomorphism (see section~\ref{SecAnalyticGerms}).

The first known theorem in this direction is obtained by Goldman and Millson in
\cite{GoldmanMillson}.

\begin{The}[Goldman-Millson]
Let $X$ be a compact Kähler manifold and $\Gamma$ its fundamental group.
Let $\rho:\Gamma\rightarrow G(\RR)$ be a representation with image contained in a compact subgroup of $G(\RR)$.
Then $(\Hom(\Gamma, G), \rho)$ is a quadratic singularity.
\end{The}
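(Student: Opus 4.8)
The plan is to reduce the statement to the deformation theory of a differential graded Lie algebra (DGLA) attached to $\rho$, and then to invoke Hodge theory — this is where the hypotheses on $X$ and on the image of $\rho$ both enter — to prove that this DGLA is \emph{formal}; formality forces the defining equations of the germ to be homogeneous and quadratic. Concretely, to $\rho$ one attaches the flat adjoint bundle $\gg_\rho=\widetilde X\times_{\Ad\rho}\gg$ on $X$ and the DGLA $L^\bullet=A^\bullet(X,\gg_\rho)$ of $\gg_\rho$-valued $\Cinf$ forms, with differential the flat connection $d_\rho$ and bracket induced by the wedge product of forms together with $[\,\cdot\,,\,\cdot\,]_\gg$; its cohomology computes $H^\bullet(\Gamma,\Ad\rho)$. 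By the deformation-theoretic dictionary of \cite{GoldmanMillson} (this is precisely the kind of statement formalized in Section~\ref{SecAnalyticGerms}), the analytic germ $(\Hom(\Gamma,G),\rho)$ is isomorphic to the germ at the origin of the Maurer--Cartan scheme of a finite-dimensional model of $L^\bullet$; in particular its isomorphism type depends only on $L^\bullet$ up to quasi-isomorphism of DGLAs.

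\emph{Formality.} Since $\rho(\Gamma)$ lies in a compact subgroup $K\subseteq G(\RR)$, averaging over $K$ yields an $\Ad(K)$-invariant inner product on $\gg$, so $\gg_\rho$ is a flat bundle carrying a parallel metric; equivalently, after complexifying, $\gg_\rho$ is a unitary local system, hence a polarized variation of Hodge structure of weight $0$ concentrated in bidegree $(0,0)$. Giving $\gg_\rho$ the holomorphic structure $\bar\partial:=d_\rho^{0,1}$, one has $d_\rho=\partial+\bar\partial$ with $\partial=d_\rho^{1,0}$, this is the Chern connection of the parallel metric, and the usual Kähler identities hold on the bigraded $A^{\bullet,\bullet}(X,\gg_\rho)$; consequently the \emph{twisted $\partial\bar\partial$-lemma} holds. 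Because each of $\partial,\bar\partial$ — hence $d^c$ — is a derivation of the bracket, $\ker d^c$ is a sub-DGLA, and the classical formality argument of Deligne--Griffiths--Morgan--Sullivan, transported from commutative differential graded algebras to DGLAs as in \cite{GoldmanMillson}, shows that
\[
L^\bullet \;\hookleftarrow\; (\ker d^c,\,d_\rho)\;\twoheadrightarrow\;\bigl(H^\bullet(L^\bullet),\,0\bigr)
\]
are quasi-isomorphisms of DGLAs. Thus $L^\bullet$ is formal.

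\emph{Conclusion.} As $X$ is compact, $H^\bullet(L^\bullet)=H^\bullet(X,\gg_\rho)$ is finite-dimensional, and it is a DGLA with zero differential; its Maurer--Cartan equation reduces to $\tfrac12[x,x]=0$ for $x\in H^1$, a finite system of homogeneous quadratic equations. Feeding this back through the two steps above, the germ $(\Hom(\Gamma,G),\rho)$ is isomorphic to the germ at the origin of the quadratic cone $\{x\in H^1(X,\gg_\rho):[x,x]=0\}$ — more precisely of this cone times a smooth factor accounting for the coboundary directions — and is therefore a quadratic singularity.

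\emph{Where the difficulty lies.} The heart of the argument is the formality step. One must first prove the $\partial\bar\partial$-lemma with coefficients in the flat bundle $\gg_\rho$, which is exactly where compactness of $X$ (to do Hodge theory) and compactness of the image of $\rho$ (to get a parallel metric, i.e.\ a unitary local system, equivalently a weight-$0$ variation of Hodge structure) are both indispensable — this fails badly for a general representation. One then has to upgrade the formality of the underlying complex to formality \emph{as a DGLA}, which hinges on the intermediate object $\ker d^c$ being closed under the bracket, and to control the passage between the (infinite-dimensional) de Rham DGLA and a finite-dimensional model so that the quadratic cone above is really the germ of $\Hom(\Gamma,G)$ at $\rho$.
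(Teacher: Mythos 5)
Your proposal is correct and follows essentially the same route the paper takes (which itself defers to \cite{GoldmanMillson}): identify the controlling DGLA $\mathcal{A}^\bullet(X,\Ad(P))$, use the compact image to get a unitary (parallel-metric) local system so that the twisted K\"ahler identities and $\partial\bar\partial$-lemma give formality via the $\ker d^c$ zig-zag, and conclude that the Maurer--Cartan equation degenerates to the homogeneous quadratic cone $[x,x]=0$. The only cosmetic difference is that the paper works with the augmented algebra $\mathcal{A}^\bullet(X,\Ad(P))_0$ to control $(\Hom(\Gamma,G),\rho)$ directly, whereas you account for the same discrepancy by the smooth factor of coboundary directions; these are equivalent.
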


We will need to review this theory in section
\ref{SecGoldmanMillson}.
To pass from the case of Kähler manifolds to that of algebraic varieties, we must study
Hodge theory and use the older results of Deligne \cite{Deligne} and Morgan \cite{Morgan},
reviewed in section~\ref{SecHodgeRationalHomotopy}.
We restrict ourself to the case of quasi-projective varieties.
In section~\ref{SecMainTheorem}
we review the ideas of Kapovich and Millson in \cite{KapovichMillson} and
our main result is:

\begin{The}
Let $X$ be a smooth complex quasi-projective variety and $\Gamma$ its fundamental group.
Let $\rho:\Gamma\rightarrow G(\RR)$ be a representation with finite image.
Corresponding to $\Ker(\rho)$ there is a finite étale Galois cover ${Y\rightarrow X}$.
Assume that $Y$ has a smooth compactification $\overline{Y}$ with first Betti number $b_1(\overline{Y})=0$.
Then $(\Hom(\Gamma, G), \rho)$ is a quadratic singularity.
\end{The}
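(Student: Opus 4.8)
The plan is to run the Goldman--Millson deformation formalism (Section~\ref{SecGoldmanMillson}) enhanced with the mixed Hodge theory of Deligne and Morgan (Section~\ref{SecHodgeRationalHomotopy}), following the strategy of Kapovich--Millson. Recall that the analytic germ $(\Hom(\Gamma, G), \rho)$ is controlled by the differential graded Lie algebra $L^\bullet = A^\bullet(X, \Ad\rho)$ of $\Cinf$ forms on $X$ with values in the adjoint local system of Lie algebras $\Ad\rho$; by the machinery recalled in Sections~\ref{SecGoldmanMillson}--\ref{SecAnalyticGerms} it is enough to exhibit a finite-dimensional model of $L^\bullet$ whose Maurer--Cartan (equivalently, deformation) functor is pro-represented by the completion of an affine cone cut out by homogeneous quadratic equations. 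Formality of $L^\bullet$ is the classical instance (and is what happens in the compact Kähler case); here I would replace formality by a weight argument.

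First I would pass to the Galois cover. Since $Y \to X$ is the étale cover attached to $\Ker(\rho)$, with finite deck group $Q = \Gamma/\Ker(\rho) \cong \Im(\rho)$, the pullback of $\Ad\rho$ to $Y$ is the constant sheaf $\gg$, so $L^\bullet \cong (A^\bullet(Y) \otimes \gg)^{Q}$ as DGLAs, where $Q$ acts by deck transformations on $A^\bullet(Y)$ and by $\Ad\circ\rho$ on $\gg$; as $Q$ is finite and we work in characteristic $0$ this is compatible with cohomology, giving $H^\bullet(X, \Ad\rho) \cong (H^\bullet(Y) \otimes \gg)^{Q}$. I would then observe that the hypothesis $b_1(\overline Y) = 0$ is intrinsic to $Y$: for any smooth compactification, $H^1(\overline Y) \hookrightarrow H^1(Y)$ has image the lowest weight piece $W_1 H^1(Y)$, so the hypothesis says precisely that $H^1(Y)$ is pure of weight $2$; in particular I may freely replace $\overline Y$ by a $Q$-equivariant smooth compactification (equivariant resolution of singularities) without losing the hypothesis. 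Because $\rho$ has finite image, $\gg$ underlies the polarizable variation of Hodge structure that is pure of weight $0$ and type $(0,0)$, so applying Morgan's construction $Q$-equivariantly on $(\overline Y, \overline Y \setminus Y)$, tensoring with $\gg$, and taking $Q$-invariants produces a mixed Hodge DGLA model $\mathcal{A}^\bullet$ of $L^\bullet$, and hence, after splitting the weight filtration, a bigraded minimal model $(\mathbb{L}(V), d)$ with $V = \bigoplus_{p,q} V^p_q$ bigraded by cohomological degree $p$ and weight $q$, the differential $d$ of bidegree $(1,0)$ and the bracket of bidegree $(0,0)$, recovering the mixed Hodge structure on $H^\bullet(X, \Ad\rho)$.

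The key numerology is then: $V^0$ lies in weight $0$ (it computes $\Lie(Z_G(\rho(\Gamma)))$); $V^1$ lies in weight $2$, since $H^1(X,\Ad\rho) = (H^1(Y) \otimes \gg)^{Q}$ is pure of weight $2$ by the previous step and $V^1 \cong H^1$ by minimality; and $H^2(X,\Ad\rho) = (H^2(Y) \otimes \gg)^{Q}$ has weights in $\{2,3,4\}$, because $H^2$ of a smooth quasi-projective variety has weights $\le 4$ and $\gg$ has weight $0$. I would then run the obstruction calculus inside this bigraded model. Starting from a class $\eta \in H^1 = V^1$, pure of weight $2$, every iterated bracket of $n$ copies of $\eta$ lies in weight $2n$, and since $d$ preserves weight one may choose all the higher terms $\omega_k$ of the Maurer--Cartan series in weight $2k$; hence the order-$n$ obstruction to integrating $\eta$ lives in the weight-$2n$ part of $H^2(X,\Ad\rho)$. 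For $n \ge 3$ this is weight $\ge 6$, but $\mathrm{Gr}^W_{\ge 5} H^2(X,\Ad\rho) = 0$, so all obstructions of order $\ge 3$ vanish automatically (equivalently, all higher Massey products of degree-$1$ classes vanish for weight reasons). Consequently the deformation functor of $L^\bullet$ is pro-represented by the completion of the quadratic cone $\{\eta \in H^1(X,\Ad\rho) : [\eta,\eta] = 0 \text{ in } H^2(X,\Ad\rho)\}$, times the smooth factor coming from $V^0$ (the $G$-orbit directions); by the formalism of Sections~\ref{SecGoldmanMillson}--\ref{SecAnalyticGerms} this shows $(\Hom(\Gamma, G), \rho)$ is a quadratic singularity. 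Note that $b_1(\overline Y) = 0$ is used exactly here: a weight-$1$ component of $H^1$ would propagate into weight-$3$ and weight-$4$ components of the order-$\ge 3$ obstructions, where $H^2$ is generally nonzero, producing genuine higher-order equations.

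I expect the main obstacle to be steps two and three made rigorous: constructing a genuine mixed Hodge DGLA model of $A^\bullet(X, \Ad\rho)$ — carrying out Morgan's mixed Hodge diagram $Q$-equivariantly, tensoring with the weight-$0$ variation $\gg$, passing to $Q$-invariants, and verifying it is DGLA-homotopy-equivalent to $A^\bullet(X, \Ad\rho)$ so that it controls the same deformation problem — together with the bookkeeping that the weight grading on the minimal model behaves as claimed (in particular that $V^1$ is pure of weight $2$ and that the degree-$0$ generators contribute only a smooth factor and do not interfere with the obstruction count). The remaining ingredients — that the obstruction maps are polynomial and that vanishing of higher obstructions upgrades to the statement that the complete local ring is that of a quadratic cone, and that the formal equivalence upgrades to an analytic one for representation varieties — are standard and are exactly the parts recalled in Sections~\ref{SecGoldmanMillson} and~\ref{SecAnalyticGerms}.
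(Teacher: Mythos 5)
Your proposal is essentially the paper's own argument: pass to the $\Phi$-equivariant Morgan minimal model of the cover $Y$, split the weight filtration over $\CC$, use $b_1(\overline{Y})=0$ to force $H^1$ into pure weight $2$, and use the bound $\leq 4$ on the weights of $H^2$ to kill every obstruction beyond the quadratic one, so the germ is the cone $[\eta,\eta]=0$ in generators of weight $2$. The one imprecision is the claim that $V^1\cong H^1$ ``by minimality'': for the $1$-minimal model the paper actually uses, degree-one generators occur in all weights $\geq 1$, and the paper instead truncates weights $\geq 5$ and separately eliminates the weight-$1$ and weight-$3$ components of a Maurer--Cartan element (via $H^1(Q_1)=H^1(Q_3)=0$ together with $Q^0=0$ coming from the augmentation) --- a cosmetic difference, since your obstruction-calculus packaging on cohomology reaches the same conclusion.
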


Recall what was proven in \cite{KapovichMillson}.

\begin{The}[Kapovich-Millson]
Let $X$ be a smooth complex algebraic variety and $\Gamma$ its fundamental group.
Let $\rho:\Gamma\rightarrow G(\RR)$ be a representation with finite image.
Then $(\Hom(\Gamma, G), \rho)$ is a weighted homogeneous singularity with generators
of weight $1,2$ and relations of weight $2,3,4$.
\end{The}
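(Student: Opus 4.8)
The plan is to reduce the germ to the deformation theory of a controlling differential graded Lie algebra (DGLA), then to exploit Morgan's mixed Hodge structure to equip a quasi-isomorphic model with a weight grading, so that the weighted-homogeneous structure and the precise weights follow by transport along quasi-isomorphisms. First I would set up the controlling DGLA: by the Goldman-Millson theory recalled in section~\ref{SecGoldmanMillson}, the analytic germ $(\Hom(\Gamma, G), \rho)$ is isomorphic to the germ at the origin of the deformation functor (Maurer-Cartan equation modulo gauge) of the DGLA $L = \aab(X, \gg_\rho)$ of differential forms on $X$ with values in the flat bundle $\gg_\rho$ associated with $\Ad\circ\rho$. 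Since $\rho$ has finite image $F = \rho(\Gamma)$, I pass to the finite étale Galois cover $Y\rightarrow X$ corresponding to $\Ker(\rho)$: there the local system becomes trivial, so $L \cong (\aab(Y)\otimes\gg)^F$, where $F$ acts by deck transformations on forms and by $\Ad\circ\rho$ on $\gg$. Its cohomology is $H^\bullet(Y,\gg)^F$, and the tangent and obstruction spaces of the germ are $H^1(L)$ and $H^2(L)$, which will provide respectively the generators and the relations.

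Next I would invoke Morgan's mixed Hodge theory from section~\ref{SecHodgeRationalHomotopy}. Because $Y$ is again smooth quasi-projective, $\aab(Y)$ underlies a mixed Hodge diagram, so its weight filtration is multiplicative and functorial, the $F$-action is by morphisms of mixed Hodge structures, and by splitting the weight filtration one obtains a DGLA $\tilde L$, quasi-isomorphic to $L$ through maps respecting an extra grading by positive weights compatible with the differential and the bracket. The arithmetic input is that the weights carried by $H^1$ of a smooth variety are exactly $1$ and $2$, and those carried by $H^2$ are exactly $2,3,4$; these weights are unchanged after taking $\gg$-coefficients and $F$-invariants, since $\gg$ sits in weight zero. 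Thus the weighted model has its degree-one cohomology in weights $1,2$ and its degree-two cohomology in weights $2,3,4$.

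Finally I would transport the structure to the germ. Since quasi-isomorphic DGLAs yield isomorphic germs of deformation functors (Goldman-Millson), I may compute with $\tilde L$. Its Kuranishi equations are homogeneous for the induced weight grading: a weighted basis of $H^1(\tilde L)$ furnishes the generators and a weighted basis of $H^2(\tilde L)$ furnishes the relations, with weights read off from the preceding step. This exhibits $(\Hom(\Gamma, G), \rho)$ as a weighted homogeneous singularity with generators of weight $1,2$ and relations of weight $2,3,4$, as claimed.

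The main obstacle is the passage from the weight \emph{filtration} to an honest weight \emph{grading} at the level of the controlling DGLA, compatibly with both the Lie bracket and the finite-group invariants: one must ensure that Morgan's splitting can be chosen $F$-equivariantly and survives the twist by $\ad\rho$, so that the grading genuinely descends to $L$ and endows the deformation functor with a weighted-homogeneous structure carrying precisely the stated weights, rather than merely a filtered structure.
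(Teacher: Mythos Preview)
Your outline is the same route the paper takes in reviewing Kapovich--Millson (sections~3.1--3.3.1): trivialize the local system on the finite cover $Y$, build a $\Phi$-equivariant Morgan $1$-minimal model so that the weight filtration splits compatibly with the bracket and the $\Phi$-action, tensor with $\gg$ and take invariants, and then read off weights $1,2$ on $H^1$ and $2,3,4$ on $H^2$. You also correctly identify the genuine technical point, namely the $\Phi$-equivariance of the splitting, which is exactly what \cite[\S 15]{KapovichMillson} (reviewed in section~3.1.4 here) establishes.

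There is, however, a real gap in your first step. You assert that $(\Hom(\Gamma,G),\rho)$ is ``Maurer--Cartan modulo gauge'' for the non-augmented $L=\aab(X,\gg_\rho)$. That is not what Goldman--Millson prove: the Iso functor of this $L$ has $H^0$ equal to the centralizer of $\rho$, is not pro-representable, and does not compute the germ of $\Repr$ (it is closer to the character variety). The controlling algebra, as in section~\ref{subsubMainObjects}, is the \emph{augmented} $\aab(X,\Ad(P))_0$, and the paper carries the augmentation through the equivariant minimal model (section~3.2.2, equation~(\ref{EqEquivarianceAugmentation})) to reach $\mathscr{L}$ with $\mathscr{L}^0=0$; only then does the deformation functor reduce to the bare Maurer--Cartan set, so that a weighted-homogeneous equation in a finite-dimensional space actually describes the germ. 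The paper then makes this concrete not via an abstract Kuranishi map but by passing to the explicit finite-dimensional quotient $Q=\mathscr{L}/\mathscr{I}$ (section~3.3.1), with $\mathscr{I}$ supported in weights~$\geq 5$ together with $\mathscr{L}_4^1\oplus d(\mathscr{L}_4^1)$, so that the Maurer--Cartan equation is visibly a polynomial system with variables in $Q^1_1\oplus Q^1_2\oplus Q^1_3$ and relations in $Q^2_2\oplus Q^2_3\oplus Q^2_4$. Once you insert the augmentation, your Kuranishi formulation would also work, but as written your argument is attached to the wrong germ.
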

 
The case where $\rho$ is the trivial representation, and the assumption
is that $X$ itself has a compactification $\overline{X}$ with $b_1(\overline{X})=0$, is already interesting but known:
many examples come from complements of arrangements of hyperplanes in complex projective space,
or more generally complements of projective algebraic curves. The result
can be obtained by combining the study of the trivial representation in \cite[section 17]{KapovichMillson}
with the notion of $1$-formality developped in the work of Dimca, Papadima and Suciu,
see \cite{DimcaPapadimaSuciuFormality}, \cite{papadimaSuciuFormality}.

In section~\ref{SecExamples} we look for new examples to apply our theorem.
We are first motivated by the case of arrangements and some special representations, 
and we study other classes of examples where we can apply our criterion with respect
to \emph{all} representations with finite image.
We find examples coming from families of complex tori in~\ref{SecAbelianVarieties}
and hermitian locally symmetric spaces in~\ref{SecSymmetricSpaces}.

\section{Preliminaries}

\subsection{Review of Goldman-Millson theory}
\label{SecGoldmanMillson}

We first give a review of \cite{GoldmanMillson}.
We fix a field $k$ of characteristic zero, usually $k=\RR$ or $\CC$.
Our schemes will always be of finite type over $k$.

\subsubsection{Representation variety}
\label{SecRepresentationVariety}
Let $\Gamma$ be a finitely presented group and let $G$ be a linear algebraic group over $k$.
There exists an affine scheme over $k$, denoted by $\Repr$, 
that represents the functor from $k$-algebras to sets
\[ \begin{split} A&\longmapsto\Hom(\Gamma, G(A)) \\
    (f:A\rightarrow B)&\longmapsto f_*:\Hom(\Gamma, G(A))\rightarrow\Hom(\Gamma, G(B)). \end{split} \]
It is called the \emph{representation variety}
(it is in general not a variety, but a scheme).
Thus, giving a representation $\rho:\Gamma\rightarrow G(k)$
is the same as giving a $k$-point of $\Repr$.
When doing topology we just write $G$ for $G(\RR)$.

\subsubsection{Analytic germs}
\label{SecAnalyticGerms}
Given a scheme $S$ and a $k$-rational point $s$, 
the isomorphism class of the complete local ring
$\widehat{O}_{S,s}$ is referred to as the \emph{analytic germ}
of $S$ at $s$. That is, two germs $(S_1,s_1)$ and $(S_2,s_2)$
are said to be \emph{analytically isomorphic} if their complete local rings are isomorphic.

A \emph{weighted homogeneous cone} is an affine scheme defined by equations
of the form $P_j(X_1,\dots,X_n)=0$ in $k^n$ where the variables $X_i$ have given \emph{weights}
$w_i$ and the polynomials $P_j$ are homogeneous of degree $d_j>0$ with respect to the weights $w_i$
(a monomial $X_1^{\alpha_1}\dots X_n^{\alpha_n}$ is of weighted degree $w_1 \alpha_1 + \cdots + w_n \alpha_n$).
We say that $w_i$ are the \emph{weights of the generators} and $d_j$ are the
\emph{weights of the relations}.
The cone is said to be \emph{quadratic} if the $X_i$ have weight $1$ and all the relations have weight $2$.
We say that an analytic germ $(S,s)$ is a \emph{weighted homogeneous singularity} with given weights
(for example, is a quadratic singularity)
if it is analytically isomorphic to a weighted homogeneous cone with these given weights.

\begin{Lem}
\label{LemAnalyticGermRC}
A germe $(S,s)$ is a weighted homogeneous singularity over $\RR$
if and only if it is over $\CC$, with the same weights.
\end{Lem}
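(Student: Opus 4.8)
The plan is to treat the two implications separately, the forward one being formal and the reverse one a genuine descent problem. For ``over $\RR$ $\Rightarrow$ over $\CC$'': if $\widehat{O}_{S,s}$ is isomorphic to the completion at the vertex of $\mathrm{Spec}\, A$, where $A=\RR[X_1,\dots,X_n]/(P_1,\dots,P_m)$ with the $X_i$ of weights $w_i$ and the $P_j$ homogeneous of degrees $d_j$, then, $\CC$ being finite and flat over $\RR$, completion commutes with $-\otimes_\RR\CC$; hence $\widehat{O}_{S_\CC,s}=\widehat{O}_{S,s}\otimes_\RR\CC$ is the completion at the vertex of $\mathrm{Spec}(A\otimes_\RR\CC)$, and $A\otimes_\RR\CC=\CC[X_1,\dots,X_n]/(P_1,\dots,P_m)$ is the same weighted homogeneous cone read over $\CC$, with the same weights $(w_i;d_j)$.

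For ``over $\CC$ $\Rightarrow$ over $\RR$'', set $R=\widehat{O}_{S,s}$, a complete local Noetherian $\RR$-algebra with residue field $\RR$ (as $s$ is an $\RR$-point). I would first replace ``$R$ is the completion of a weighted homogeneous cone of type $(w_i;d_j)$'' by the equivalent statement that $\mathrm{Spf}\,R$ carries an algebraic action of $\mathbb{G}_{m,\RR}$ that is \emph{contracting} (positive weights on the cotangent space $\mathfrak{m}_R/\mathfrak{m}_R^2$), together with a system of weighted-homogeneous generators of $\mathfrak{m}_R$ of weights $\{w_i\}$ whose relations have degrees $\{d_j\}$; equivalently, that $R$ underlies a complete graded $\RR$-algebra of that type. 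This is the standard dictionary: a grading yields the action, and conversely the weight decomposition of the rational representation $R$ produces a dense graded subalgebra $A\subseteq R$, finitely generated because $R$ is Noetherian, with $\widehat{A}=R$. By hypothesis such a structure exists over $\CC$, so the task is to descend a contracting $\mathbb{G}_m$-action of the prescribed type from $R_\CC=R\otimes_\RR\CC$ to $R$.

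I would construct the action inductively along $R=\varprojlim_N R/\mathfrak{m}_R^{\,N}$: at the first step choose any $\mathbb{G}_{m,\RR}$-action on the real vector space $\mathfrak{m}_R/\mathfrak{m}_R^2$ with weights $\{w_i\}$ (over $\CC$ it is then conjugate to the given one), and at each further step lift the action from $R/\mathfrak{m}_R^{\,N}$ to $R/\mathfrak{m}_R^{\,N+1}$. The obstruction to such a lift lies in a finite-dimensional $\RR$-vector space (a $\mathbb{G}_m$-equivariant cotangent cohomology group of $R$ over $\RR$, assembled from the weight components of the truncated cotangent complex $\mathbb{L}_{R/\RR}$), and the lifts, when they exist, form a torsor under an analogous space. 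Crucially, both the obstruction groups and the obstruction classes are compatible with the faithfully flat base change $\RR\to\CC$, since $\mathbb{L}_{R_\CC/\CC}\cong\mathbb{L}_{R/\RR}\otimes_\RR\CC$ and isotypic decompositions commute with flat base change. As the action exists over $\CC$, the complex obstructions vanish; being the images of the real ones under $-\otimes_\RR\CC$, the real obstruction classes vanish too, and one can arrange the successive lifts so that over $\CC$ they stay conjugate to the given complex action. The weight decomposition of the resulting $\mathbb{G}_{m,\RR}$-action then yields the graded subalgebra $A\subseteq R$ with $\widehat A=R$; since $\mathrm{gr}$ commutes with $-\otimes_\RR\CC$, the generator-weights and relation-weights of $A$ coincide with those of $A\otimes_\RR\CC$, namely $(w_i;d_j)$. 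Hence $\widehat{O}_{S,s}$ is the completion of a weighted homogeneous cone over $\RR$ with the same weights.

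The main obstacle is exactly this descent step. The point is that, although ``being a weighted homogeneous singularity of a prescribed type'' superficially looks like the existence of a rational point on a moduli space — which would not descend from $\CC$ to $\RR$ in general — the obstructions genuinely encountered are \emph{linear}, i.e. classes in $\RR$-vector spaces functorial under base change, rather than classes in a nonabelian $H^1$ of an algebraic group; so they vanish over $\RR$ as soon as they do over $\CC$. I expect the real work to be in setting up the $\mathbb{G}_m$-equivariant obstruction theory cleanly; a purely Galois-theoretic variant — conjugating the complex $\mathbb{G}_m$-action by complex conjugation and trivializing the resulting $1$-cocycle — reduces to the same linear input.
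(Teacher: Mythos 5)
Your forward direction matches the paper's (the cone simply complexifies), but for the reverse direction the paper does something entirely different and far more elementary than what you propose: given the complex cone $P_j(X_1,\dots,X_n)=0$, it substitutes $X_i=x_i+iy_i$, assigns the weight $w_i$ to both new real variables, and separates each relation into $\Re P_j=0$ and $\Im P_j=0$, which are weighted homogeneous of the same degree $d_j$. This exhibits a real weighted homogeneous cone in $2n$ variables with generator weights $w_1,\dots,w_n$ (each occurring twice) and relation weights $d_1,\dots,d_m$ (each occurring twice) --- no $\mathbb{G}_m$-action, no cotangent complex, no obstruction theory. You have correctly sensed that there is a descent issue lurking here (the substitution by itself produces the Weil restriction of the complex cone, and its identification with the germ $(S,s)$ is left implicit in the paper), but the intended argument is this two-line manipulation, not an equivariant deformation theory.

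As for your own route: it is a genuinely different strategy, and as written it has a gap exactly at the step you flag yourself, namely the descent of the contracting $\mathbb{G}_m$-action from $R_\CC$ to $R$. Moreover the mechanism you invoke is not the one that does the work. For lifting a $\mathbb{G}_m$-action along the square-zero extensions $R/\mathfrak{m}^{N+1}\to R/\mathfrak{m}^N$, the relevant obstruction and torsor groups are higher Hochschild cohomology groups of $\mathbb{G}_m$ with values in finite-dimensional representations, and these vanish identically over any field of characteristic zero by linear reductivity; so ``the complex obstruction vanishes, hence so does its real form'' carries no information. The genuine difficulty is not linear: it is to guarantee that the real cocharacter you build at each finite level lies in the image of $\Aut(R/\mathfrak{m}^{N+1})\to\Aut(R/\mathfrak{m}^N)$ and stays conjugate over $\CC$ (hence weight-equivalent) to the given complex grading. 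That is a conjugacy and rational-point problem for cocharacters of the pro-algebraic group $\Aut(R)$, governed by a nonabelian $H^1$ of centralizers --- precisely the kind of obstruction you assert is absent --- and it would need an actual argument (for instance via maximal tori of $\Aut(R)$ defined over $\RR$ and a comparison of real and complex weight data) before the weight decomposition you extract at the end can be said to exist. So the proposal is a plausible plan whose central step remains to be proved, whereas the paper's proof, whatever one thinks of its brevity, is an explicit coordinate construction.
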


\begin{proof}
Of course a weighted homogeneous cone over $\RR$ can be complexified to a cone over $\CC$
with the same weights. In the other direction, given the equations
$P_j(X_1,\dots,X_n)$ over $\CC$, replace the variables $X_i$ by
their real and imaginary parts $x_i, y_i$
and give the same weight $w_i$ to the two new variables. Then
expand the relations $P_j(x_1+iy_1,\dots,x_n+iy_n)=0$, 
separate real and imaginary part, and this gives two equations both with
the same weighted homogeneous degree $d_j$.
\end{proof}

We denote by $\kArt$ the category of Artin local $k$-algebras.
An element $A$ in $\kArt$
has a unique maximal ideal which we always denote by $\mathfrak{m}$, has residue field $k$
and is of finite dimension over $k$. This implies that $\mathfrak{m}$ is a nilpotent ideal, and this
gives a natural map $A\rightarrow k$ which is reduction modulo $\mathfrak{m}$.
An analytic germ $(S,s)$ defines a functor of Artin rings 
\[ \begin{split} F_{S,s}:\kArt & \longrightarrow\mathbf{Set} \\
 A & \longmapsto\Hom(\widehat{O}_{S,s}, A). \end{split} \]
Such a functor is called \emph{pro-representable}.

\begin{The}[{\cite[Theorem~3.1]{GoldmanMillson}}]
Two germs $(S_1,s_1)$ and $(S_2,s_2)$ are analytically isomorphic
if and only if
the associated pro-representable functors
$F_{S_1,s_1}$, $F_{S_2,s_2}$ are isomorphic.
\end{The}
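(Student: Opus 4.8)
The plan is to recognize this as the Yoneda lemma for pro-objects of $\kArt$. The ``only if'' direction is immediate: an isomorphism $\widehat{O}_{S_1,s_1}\xrightarrow{\ \sim\ }\widehat{O}_{S_2,s_2}$ of $k$-algebras induces, by precomposition, a bijection $\Hom(\widehat{O}_{S_2,s_2},A)\to\Hom(\widehat{O}_{S_1,s_1},A)$ natural in $A\in\kArt$, i.e.\ an isomorphism of functors $F_{S_2,s_2}\xrightarrow{\ \sim\ }F_{S_1,s_1}$. So all the content lies in the converse.

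First I would record the elementary fact that makes $F_{S,s}$ ``see'' the whole ring $R:=\widehat{O}_{S,s}$. Being a complete local Noetherian $k$-algebra with residue field $k$ and maximal ideal $\mathfrak{m}$ (as $S$ is of finite type over $k$ and $s$ is $k$-rational), one has $R=\varprojlim_n R/\mathfrak{m}^n$ with each $R/\mathfrak{m}^n$ in $\kArt$. For $A\in\kArt$ the ideal $\mathfrak{m}_A$ is nilpotent, so any $k$-algebra homomorphism $R\to A$ (automatically local, since any $k$-algebra map between local $k$-algebras with residue field $k$ is local) kills a power of $\mathfrak{m}$; hence
\[ F_{S,s}(A)=\Hom(R,A)=\varinjlim_n\Hom(R/\mathfrak{m}^n,A), \]
with injective transition maps. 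Thus $R$, regarded as a pro-object of $\kArt$, pro-represents $F_{S,s}$, and the theorem amounts to the full faithfulness of the functor $R\mapsto\Hom(R,-)$ from complete local Noetherian $k$-algebras with residue field $k$ to $\mathrm{Fun}(\kArt,\mathbf{Set})$.

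To get the ``if'' direction I would start from an isomorphism $\phi\colon F_{S_1,s_1}\xrightarrow{\ \sim\ }F_{S_2,s_2}$, write $R_i=\widehat{O}_{S_i,s_i}$ with maximal ideal $\mathfrak{m}_i$, and build the comparison ring map by feeding $\phi$ the universal elements. For each $n$ the canonical surjection $p_n\colon R_1\twoheadrightarrow R_1/\mathfrak{m}_1^n$ is an element of $F_{S_1,s_1}(R_1/\mathfrak{m}_1^n)$; put $q_n:=\phi_{R_1/\mathfrak{m}_1^n}(p_n)\in\Hom(R_2,R_1/\mathfrak{m}_1^n)$. Naturality of $\phi$ along the projections $R_1/\mathfrak{m}_1^{n+1}\to R_1/\mathfrak{m}_1^n$ forces the $q_n$ to be compatible, so they glue into a $k$-algebra map $u\colon R_2\to\varprojlim_n R_1/\mathfrak{m}_1^n=R_1$; the same construction with $\phi^{-1}$ and the projections of $R_2$ gives $v\colon R_1\to R_2$. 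It then remains to check $u$ and $v$ are mutually inverse; for this I would show that precomposition with $u$, resp.\ $v$, recovers $\phi$, resp.\ $\phi^{-1}$ (i.e.\ $u^{*}=\phi$, $v^{*}=\phi^{-1}$), using that any $f\in\Hom(R_2,A)$ factors through some $R_2/\mathfrak{m}_2^N$ together with a one-line naturality chase. Then $(v\circ u)^{*}=u^{*}\circ v^{*}=\phi\circ\phi^{-1}=\id$, so $v\circ u=\id_{R_2}$ because $R\mapsto\Hom(R,-)$ is faithful (test on the canonical projections and use $\bigcap_n\mathfrak{m}_2^n=0$), and symmetrically $u\circ v=\id_{R_1}$.

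I do not expect a genuine obstacle — this is Grothendieck's pro-representability formalism — but the two points needing care are the ones isolated above: the identity $\Hom(R,A)=\varinjlim_n\Hom(R/\mathfrak{m}^n,A)$, which rests on completeness and Noetherianity of $R$ together with nilpotence of $\mathfrak{m}_A$; and the verification that the maps $u,v$ extracted from $\phi,\phi^{-1}$ are genuinely inverse, which is the naturality argument above. One also uses tacitly that every ring homomorphism between the local rings in play is local.
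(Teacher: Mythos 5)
The paper does not prove this statement; it is quoted verbatim from Goldman--Millson (their Theorem 3.1), so there is no in-paper proof to compare against. Your argument is correct and is the standard one underlying the cited result: the reduction $\Hom(R,A)=\varinjlim_n\Hom(R/\mathfrak{m}^n,A)$ (valid because $R$ is complete local Noetherian with residue field $k$, every $k$-algebra map to an Artin local $k$-algebra is local, and $\mathfrak{m}_A$ is nilpotent), followed by the Yoneda-type extraction of $u,v$ from the universal projections and the separatedness $\bigcap_n\mathfrak{m}^n=0$ to conclude they are mutually inverse. No gaps.
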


Thus, in order to study the analytic germ of a representation $\rho$
in the representation variety we only have to study its pro-representable functor,
which is also the functor
\[ \begin{split} \kArt & \longrightarrow\mathbf{Set} \\
A & \longmapsto \{ \tilde{\rho}\in\Hom(\Gamma, G(A))\ |\ \tilde{\rho}=\rho \mod\mathfrak{m} \} \end{split} \]
interpreted as the functor of deformations of $\rho$ over $\kArt$;
and the type of analytic singularity corresponds to the obstruction theory for deformations
of $\rho$.

\subsubsection{Differential graded Lie algebras}

Let $L$ be a differential graded Lie algebra.
It has a grading $L=\oplus_{i\geq 0} L^i$, a bracket $[-,-]$
with $[L^i,L^j]\subset L^{i+j}$,
and a derivation $d$ of degree $1$ satisfying the usual identities in the graded sense,
see \cite{GoldmanMillson} and see also \cite{Manetti}.
The basic example is: take a differential graded algebra $\mathcal{A}$ commutative in the
graded sense (for example the De Rham algebra of a smooth manifold)
and a Lie algebra $\mathfrak{g}$ and consider the tensor product $\mathcal{A}\otimes\mathfrak{g}$
with bracket 
\[ [\alpha\otimes u, \beta \otimes v]:=(\alpha\wedge\beta)\otimes[u,v] \]
and differential
\[ d(\alpha\otimes u) := (d\alpha)\otimes u . \]

For such an $L$, $L^0$ is a usual Lie algebra
and for and Artin local $k$-algebra $A$,
$L^0\otimes\mathfrak{m}$ is a nilpotent Lie algebra on wich we can define a group structure via the
Baker-Campbell-Hausdorff formula. This group is denoted by $\exp(L^0\otimes\mathfrak{m})$.

Recall that a \emph{groupoid} is a small category $\mathscr{C}$ in which all arrows are invertible.
An example is provided by the so-called \emph{action groupoid}: let a group $H$ act on a set $E$, take the set of objects
to be $\Obj \mathscr{C}:=E$ and the arrows $x\rightarrow y$ are the $h\in H$ such that $h.x=y$.
We denote by $\Iso \mathscr{C}$ the set of isomorphism classes of $\mathscr{C}$;
in the case of an action groupoid this is just the usual quotient $E/H$.

We define a functor $A\mapsto \mathscr{C}(L,A)$
from $\kArt$ to groupoids, called the \emph{Deligne-Goldman-Millson functor}: 
the set of objects is
\begin{equation}
\label{EqMaurerCartan}
\Obj \mathscr{C}(L,A) := \{\eta\in L^1 \otimes \mathfrak{m} \ |\ d\eta+\frac{1}{2}[\eta,\eta]=0 \}
\end{equation}
(this one is called the \emph{Maurer-Cartan equation})
and the arrows of the groupoid are given by the action of the group
$\exp(L^0\otimes\mathfrak{m})$
by
\[ \exp(\alpha).\eta := \eta + \sum_{n=0}^{\infty} \frac{(\ad(\alpha))^n}{(n+1)!}([\alpha,\eta]-d\alpha). \]
We then have a functor $A\mapsto\Iso\mathscr{C}(L,A)$ from $\kArt$ to sets.

Given a scheme $S$ over $k$ and a $k$-rational point $s$, we say that
$L$ \emph{controls} the germ $(S,s)$ if the functor $A\mapsto\mathscr{C}(L,A)$
is isomorphic to the functor $F_{S,s}$.

Recall that if $L$ and $L'$ are two differential graded Lie algebras,
a morphism $\phi:L\rightarrow L'$ is said to be a \emph{$1$-quasi-isomorphism}
if it induces an isomorphism on the cohomology groups $H^0$, $H^1$ and an injection on $H^2$.
The algebras $L$, $L'$ are said to be \emph{$1$-quasi-isomorphic}
if there is a sequence of $1$-quasi-isomorphisms connecting them.
 
\begin{The}[{\cite[Theorem~2.4]{GoldmanMillson}}]
\label{TheGoldmanMillsonQuasiIsomorphism}
If $L$ and $L'$ are two differential graded lie algebras and $\phi:L\rightarrow L'$
is a $1$-quasi-isomorphism,
then the germs controlled by $L$ and $L'$ are analytically isomorphic.
\end{The}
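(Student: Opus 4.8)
The plan is to turn the statement into a comparison of set-valued deformation functors and to prove that comparison by induction along small extensions in $\kArt$. Recall that $L$ controls a germ $(S,s)$ means, by definition, that the functor $A\mapsto\Iso\mathscr{C}(L,A)$, which I abbreviate to $\mathrm{Def}_L$, is isomorphic to $F_{S,s}$; and by the pro-representability criterion \cite[Theorem~3.1]{GoldmanMillson} recalled above, two germs are analytically isomorphic as soon as the functors $F_{S_1,s_1}$ and $F_{S_2,s_2}$ are isomorphic. Hence it suffices to prove that a $1$-quasi-isomorphism $\phi\colon L\to L'$ induces an isomorphism of functors $\mathrm{Def}_L\xrightarrow{\sim}\mathrm{Def}_{L'}$. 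Since ``$1$-quasi-isomorphic'' means that $L$ and $L'$ are connected by a finite zig-zag of $1$-quasi-isomorphisms, and isomorphisms of functors compose and invert, I may assume there is a single morphism $\phi$ as above.

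Now every $A$ in $\kArt$ can be reached from $k$ by a finite sequence of small extensions, i.e.\ surjections $0\to J\to A'\to A\to 0$ with $\mathfrak{m}_{A'}J=0$, so that $J$ is a finite-dimensional $k$-vector space; I would prove by induction on the length of $A$ that $\mathrm{Def}_L(A)\to\mathrm{Def}_{L'}(A)$ is bijective. For $A=k$ the groupoid $\mathscr{C}(L,k)$ has only the object $0$ and only the identity arrow, so both sides are a single point. The inductive step rests on the obstruction calculus of the Deligne--Goldman--Millson functor attached to a small extension $0\to J\to A'\to A\to 0$, which I would set up directly from the Maurer--Cartan equation and the exponential gauge action, functorially in $L$: first, an obstruction map $\mathrm{Def}_L(A)\to H^2(L)\otimes J$, obtained by lifting a Maurer--Cartan element $\eta$ to some $\tilde\eta\in L^1\otimes\mathfrak{m}_{A'}$, observing that $d\tilde\eta+\tfrac12[\tilde\eta,\tilde\eta]$ lies in $L^2\otimes J$ and is $d$-closed by the graded Jacobi identity, and checking that its cohomology class depends on no choices and vanishes precisely when $[\eta]$ lifts to $\mathrm{Def}_L(A')$; second, the fact that $H^1(L)\otimes J$ acts transitively on each non-empty fibre of $\mathrm{Def}_L(A')\to\mathrm{Def}_L(A)$ over $[\eta]$, with isotropy coming from the automorphism group of $\eta$ in $\mathscr{C}(L,A)$ --- two lifts differ by a $d$-closed element of $L^1\otimes J$, gauge-equivalent lifts differ by $d\alpha$ with $\alpha\in L^0\otimes J$, and lifts of automorphisms of $\eta$ account for the remaining identifications. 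All of this is natural in $L$, so $\phi$ yields a commutative ladder between the corresponding data for $L$ and for $L'$.

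The induction then closes by a diagram chase using the three parts of the hypothesis. For surjectivity of $\mathrm{Def}_L(A')\to\mathrm{Def}_{L'}(A')$: given $[\eta']$, push it to $\mathrm{Def}_{L'}(A)$ and lift it to some $[\eta]\in\mathrm{Def}_L(A)$ by the inductive hypothesis; by naturality $H^2(\phi)$ sends the obstruction of $[\eta]$ to that of $[\eta']$, which is $0$, so the obstruction of $[\eta]$ vanishes because $H^2(\phi)$ is injective, and $[\eta]$ lifts to some $[\tilde\eta]\in\mathrm{Def}_L(A')$; finally $\phi([\tilde\eta])$ and $[\eta']$ lie in the same fibre, hence are related by an element of $H^1(L')\otimes J$, which is the image under the surjection $H^1(\phi)$ of an element $v$ of $H^1(L)\otimes J$, and $v\cdot[\tilde\eta]$ is then a lift of $[\eta']$. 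For injectivity: two elements of $\mathrm{Def}_L(A')$ with equal image in $\mathrm{Def}_{L'}(A')$ have equal image in $\mathrm{Def}_L(A)$ by the inductive hypothesis, hence lie in the same fibre and are related by a class in $H^1(L)\otimes J$ modulo automorphisms of $\eta$; applying $\phi$, using that $H^1(\phi)$ is injective and that $H^0(\phi)$ being bijective makes the automorphism groups in $\mathscr{C}(L,A)$ and $\mathscr{C}(L',A)$ correspond, one finds that this class is trivial, so the two elements coincide. This completes the induction and hence the proof.

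The step I expect to be the real obstacle is the one in the second paragraph: building the obstruction map and the (homogeneous-space) structure of the fibres purely from the Maurer--Cartan equation and the exponential gauge formula, with all the well-definedness and naturality verifications, and especially handling the interaction with the automorphism groups inside the groupoids --- this is exactly where the $H^0$ part of the $1$-quasi-isomorphism hypothesis is used, and it must be tracked carefully, for instance through a parallel induction showing that $\phi$ induces isomorphisms of those automorphism groups. Everything else is formal homological bookkeeping. One could alternatively first reduce, via the mapping cylinder of $\phi$, to the two cases where $\phi$ is surjective or injective, but this repackages rather than removes the core computation.
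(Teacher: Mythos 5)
The paper does not prove this statement at all --- it is quoted verbatim from Goldman--Millson (their Theorem~2.4) and used as a black box. Your proposal reconstructs essentially the original argument of that reference (and of Deligne's letter behind it): reduction to an isomorphism of the deformation functors $A\mapsto\Iso\mathscr{C}(L,A)$, induction over small extensions in $\kArt$, and the standard obstruction calculus with obstructions in $H^2\otimes J$, transitive $H^1\otimes J$-action on fibres, and automorphisms governed by $H^0$; the one genuinely delicate point, the interaction of the $H^1$-action's stabilizers with the automorphism groups (which is why one really proves an equivalence of groupoids rather than just a bijection of isomorphism classes), is exactly the one you flag and propose to handle by a parallel induction, which is how the cited proof does it.
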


Thus to understand an analytic germ $(S,s)$ it suffices to understand the functor
$A\mapsto\Iso\mathscr{C}(L,A)$ for some controlling algebra $L$ up to $1$-quasi-isomorphism.

\begin{Rem}
\label{RemL0}
If $L^0=0$ then this functor is equal to $A\mapsto\Obj\mathscr{C}(L,A)$.
If futhermore $L^1$ is finite-dimensional this is  exactly the pro-representable functor
associated to the analytic germ at $0$ of the Maurer-Cartan equation
$d\eta+\frac{1}{2}[\eta,\eta]=0 $ for $\eta\in L^1$.
\end{Rem}

\subsubsection{Main construction}
\label{subsubMainObjects}
We explain the main construction to relate theses objects.
Let $X$ be a real manifold, $x$ a base point, $\Gamma$ its fundamental group and $G$ a linear algebraic group over $\RR$.
Let $\gg$ be the Lie algebra of $G$. Let $\rho:\Gamma\rightarrow G(\RR)$ a representation.
Let $P$ be the principal bundle obtained by the left monodromy action of $\Gamma$
on $G$ via $\rho$. If $\widetilde{X}$ is a universal covering space for $X$, on which 
we make $\Gamma$ act on the left, then 
\[ P:=\widetilde{X}\times_\Gamma G=(\widetilde{X}\times G)/\Gamma \]
where $\Gamma$ acts diagonally.
The group $G$ acts on its Lie algebra via the adjoint representation $\Ad$
and $\Gamma$ acts on $\gg$ by $\Ad\circ\rho$.
We denote by $\Ad(P)$ the adjoint bundle
\[ \Ad(P):=P\times_G \gg =\widetilde{X}\times_\Gamma \gg \]
and it comes with a flat connection
such that the algebra of differential forms with value in $\Ad(P)$, denoted by
$\mathcal{A}^\bullet(X, \Ad(P))$, has the structure of a differential graded Lie algebra.

Given the base point $x$ we define an augmentation $\epsilon:\mathcal{A}^\bullet(X,\Ad(P))\rightarrow\gg$
by evaluating degree $0$ forms at $x$ and sending the others to zero.
We put $\mathcal{A}^\bullet(X,\Ad(P))_0:=\Ker(\epsilon)$.

\begin{The}[{\cite[Theorem~6.8]{GoldmanMillson}}]
The differential graded Lie algebra $\mathcal{A}^\bullet(X,\Ad(P))_0$
controls the analytic germ $(\Repr, \rho)$.
\end{The}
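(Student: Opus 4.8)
The plan is to translate the Deligne--Goldman--Millson groupoid $\mathscr{C}(\aab(X,\Ad(P))_0,A)$ into the geometry of flat connections and their holonomy, and to match it with the deformation functor $F_{\Repr,\rho}$ of $\rho$. Fix $A\in\kArt$; since its maximal ideal $\mathfrak{m}$ is nilpotent, the exponential gives a bijection $\exp:\gg\otimes\mathfrak{m}\xrightarrow{\sim}\Ker\bigl(G(A)\to G(k)\bigr)$, which I will use freely. The flat connection on $\Ad(P)$ induced by $\rho$ is the differential $D_0=d$ of $\aab(X,\Ad(P))$. I would first observe that any deformation over $A$ of this flat connection is of the form $D_0+\eta$ with $\eta\in\mathcal{A}^1(X,\Ad(P))\otimes\mathfrak{m}$, and that its flatness $(D_0+\eta)^2=0$ is exactly the Maurer--Cartan equation $d\eta+\tfrac12[\eta,\eta]=0$. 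As $\epsilon$ vanishes in positive degrees, $\aab(X,\Ad(P))_0$ agrees with $\aab(X,\Ad(P))$ in all degrees $\ge 1$, so the objects of the groupoid are precisely the flat $A$-deformations of $D_0$.

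Next I would build the comparison map by holonomy. The base point $x$, together with the natural trivialization $\Ad(P)_x\cong\gg$ arising from $P=\widetilde X\times_\Gamma G$, assigns to each flat connection $D_0+\eta$ a holonomy representation $\Gamma\to G(A)$ based at $x$; this is the Riemann--Hilbert correspondence over the Artin base $A$, which I would set up by parallel transport after pulling back to the universal cover $\widetilde X$ (where all bundles are trivial) and where the nilpotence of $\mathfrak{m}$ makes the transport a finite expression. Because $D_0+\eta$ reduces to $D_0$ modulo $\mathfrak{m}$, this holonomy reduces to $\rho$, hence defines an element of $F_{\Repr,\rho}(A)$. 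Conversely, every deformation $\tilde\rho$ of $\rho$ produces a flat $G(A)$-bundle carrying a flat connection of this form, so the assignment is surjective onto $F_{\Repr,\rho}(A)$.

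I would then identify the fibers of this map with the groupoid's morphisms. An arrow is a gauge transformation $\exp(\alpha)$ with $\alpha$ in the degree-$0$ part of $\aab(X,\Ad(P))_0\otimes\mathfrak{m}$, i.e.\ a section of $\Ad(P)\otimes\mathfrak{m}$ vanishing at $x$. I would check that the action $\exp(\alpha).\eta=\eta+\sum_{n=0}^{\infty}\frac{(\ad(\alpha))^n}{(n+1)!}([\alpha,\eta]-d\alpha)$ is precisely conjugation of the connection $D_0+\eta$ by the bundle automorphism $\exp(\alpha)$, and that under holonomy such a conjugation conjugates the based representation by the factor $\exp(\alpha)(x)=e$, leaving it unchanged. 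Gauge-equivalent objects thus have the same holonomy; conversely two flat connections with equal based holonomy differ by a gauge transformation equal to the identity at $x$, so the fibers of the holonomy map are exactly the gauge orbits. This yields a natural bijection $\Iso\mathscr{C}(\aab(X,\Ad(P))_0,A)\xrightarrow{\sim}F_{\Repr,\rho}(A)$.

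Finally I would establish rigidity: an automorphism of an object $\eta$ is an $\alpha$ with $\epsilon(\alpha)=0$ and $\exp(\alpha).\eta=\eta$, that is, a section parallel for $D_0+\eta$ that vanishes at $x$, and such a section vanishes identically by parallel transport. Hence the groupoid has only identity automorphisms, so $A\mapsto\mathscr{C}(\aab(X,\Ad(P))_0,A)$ is isomorphic to the set-valued functor $F_{\Repr,\rho}$ itself, not merely to its set of isomorphism classes. The main obstacle is the content of the two previous paragraphs: making the Artin-level Riemann--Hilbert correspondence precise and verifying that the Goldman--Millson gauge formula is genuine conjugation of connections, so that the base-point trivialization records representations on the nose rather than up to conjugacy---this is exactly what passing to $\Ker(\epsilon)$ accomplishes. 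Since all constructions are natural in $A$, this produces the required isomorphism of functors and shows that $\aab(X,\Ad(P))_0$ controls $(\Repr,\rho)$.
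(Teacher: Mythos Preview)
The paper does not give a proof of this statement: it is stated as a citation of \cite[Theorem~6.8]{GoldmanMillson} and used as a black box in the review of Goldman--Millson theory. So there is no ``paper's own proof'' to compare against.

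That said, your outline is essentially the original Goldman--Millson argument and is correct in structure. The three ingredients you isolate are exactly the right ones: (i) Maurer--Cartan elements in $\aab(X,\Ad(P))\otimes\mathfrak{m}$ are the flat $A$-connections reducing to $D_0$ modulo $\mathfrak{m}$; (ii) the gauge action by $\exp(\mathcal{A}^0(X,\Ad(P))\otimes\mathfrak{m})$ is conjugation of connections by bundle automorphisms; (iii) holonomy based at $x$ realizes the Riemann--Hilbert equivalence over the Artin base, and passing to $\Ker(\epsilon)$ pins down the fiber at $x$ so that one recovers representations rather than conjugacy classes. Your rigidity step (a parallel section vanishing at $x$ vanishes identically) is also the correct reason why $\Iso\mathscr{C}$ coincides with $\Obj\mathscr{C}$ modulo gauge and why the functor is genuinely pro-representable. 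The only places requiring care, which you flag yourself, are the explicit verification that the Goldman--Millson formula for $\exp(\alpha).\eta$ is literally $\exp(\alpha)\circ(D_0+\eta)\circ\exp(-\alpha)-D_0$, and the construction of parallel transport over $A$ (which is unproblematic since $\mathfrak{m}$ is nilpotent, making the relevant ODE a polynomial identity). With those checks, your sketch is a faithful summary of the proof in \cite{GoldmanMillson}.
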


It is then proven in \cite{GoldmanMillson} that when $X$ is a compact Kähler manifold,
$\mathcal{A}^\bullet(X,\Ad(P))_0$ is quasi-isomorphic (over $\CC$)
to a differential graded Lie algebra with zero differential
and this implies that the analytic germ controlled by it is quadratic.

\subsection{Mixed Hodge theory and rational homotopy}
\label{SecHodgeRationalHomotopy}
Next we give a short review of
\cite{Deligne} and \cite{Morgan}.
See also \cite{PetersSteenbrink}.

\subsubsection{Hodge structures}
Given a finite-dimensional vector space $V$ over $\RR$, a \emph{Hodge structure}
of weight $n$ over $V$ is the data of a decreasing (finite) filtration $F$ of $V_\CC := V\otimes \CC$
and a decomposition in bigraded parts $V_\CC = \bigoplus_{i+j=n} V^{i,j}$, where
$F^p(V_\CC) = \bigoplus_{i\geq p} V^{i,j}$,
with $\overline{V^{j,i}} = V^{i,j}$.
A \emph{mixed Hodge structure} on $V$ is the data of an increasing (finite) filtration $W$, a decreasing filtration $F$
of $V_\CC$, such that $F$ induces on $\Gr_n^W (V)$ a Hodge structure of weight $n$, for all $n$;
so $\Gr_n^W(V_\CC)=\bigoplus_{i+j=n} V^{i,j}$ with $\overline{V^{j,i}}=V^{i,j}$ modulo $W_{n-1}(V_\CC)$.
The category of mixed Hodge structures is abelian (this is not trivial, see \cite[Theorem~1.2.10]{Deligne}).

Given a mixed Hodge structure on $V$, there is one preferred way of splitting $V_\CC = \bigoplus A^{i,j}$
such that $W_n (V_\CC) = \bigoplus_{i+j\leq n} A^{i,j}$, $F^p(V_\CC)=\bigoplus_{i\geq p} A^{i,j}$
and then $V^{i,j}$ becomes naturally isomorphic with $A^{i,j}$.
We call it the \emph{Deligne splitting}, see  \cite[Proposition~1.9]{Morgan}

\subsubsection{Hodge structures on cohomology groups}
Suppose that $X$ is a smooth complex quasi-projective variety. We denote by $\overline{X}$
a smooth compactification such that $D:=\overline{X} \setminus X$ is a divisor with normal crossings.
By Deligne \cite{Deligne}, the cohomology groups of $X$ are equipped with a mixed Hodge structure which is
independant of the choice of $\overline{X}$. On $H^n(X)$ the nonzero graded parts for $W$ are of weight between $n$ and $2n$
and $\Gr_n^W H^n(X) = H^n(\overline{X})$.

In our case of interest, we will have a finite étale Galois cover $Y\rightarrow X$
and we are interested in the condition $b_1(\overline{Y})=0$.
Remark that since $b_1$ is a birational invariant, it is enough to have for $\overline{Y}$ a smooth compactification,
not necessarily by a divisor with normal crossings.

The condition $b_1(\overline{Y})=0$ is equivalent to one of the following:

\begin{itemize}
\item $H^1(\overline{Y}, \QQ)=0$.
\item $\pi_1(\overline{Y})^{\mathrm{ab}}$ if finite.
\item $q(\overline{Y})=0$, where $q:=\dim H^0(\overline{Y}, \Omega^1_{\overline{Y}})$ is the irregularity.
\item The mixed Hodge structure on $H^1(Y)$ is pure of weight $2$.
\end{itemize}

There is also one characterization that will motivate our section
\ref{SecAbelianVarieties}.

\begin{Lem}
\label{LemCriteriaAlbanese}
Let $X$ be a smooth quasi-projective variety. Then $b_1(\overline{X})=0$ if and only if there is no
nonconstant holomorphic map $f:X\rightarrow A$ to an abelian variety $A$.
\end{Lem}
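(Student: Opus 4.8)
The plan is to reduce the statement to properties of the Albanese variety of a smooth projective compactification. First I would fix a smooth projective compactification $\overline{X}$ of $X$ — this is harmless since, for smooth projective varieties, $b_1$ is a birational invariant (equivalently $h^{1,0}=\dim H^0(\overline{X},\Omega^1_{\overline{X}})$ is) — and recall the facts I want to use: $\Alb(\overline{X})$ is an abelian variety of dimension $q(\overline{X})=\dim H^0(\overline{X},\Omega^1_{\overline{X}})=\tfrac12 b_1(\overline{X})$, it carries the Albanese morphism $\mathrm{alb}\colon\overline{X}\to\Alb(\overline{X})$ normalized at a base point, whose image generates $\Alb(\overline{X})$, and every morphism from $\overline{X}$ to an abelian variety sending the base point to the origin factors uniquely through $\mathrm{alb}$ by a homomorphism of abelian varieties. (I would also note that since an abelian variety is projective, GAGA lets me treat maps into it from a proper variety as morphisms; it is really morphisms $X\to A$ that are relevant here.)

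For the implication ``$b_1(\overline{X})\neq 0\Rightarrow$ there is a nonconstant map to an abelian variety'', I would argue that $b_1(\overline{X})\neq 0$ gives $q(\overline{X})>0$, hence $\Alb(\overline{X})$ is nontrivial and $\mathrm{alb}$ is nonconstant (its image lies in no proper abelian subvariety, in particular in no point). Since $X$ is Zariski dense in $\overline{X}$ and a morphism constant on a dense subset is constant, the restriction $\mathrm{alb}|_X\colon X\to\Alb(\overline{X})$ is again nonconstant, which is the desired map.

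For the converse, ``$b_1(\overline{X})=0\Rightarrow$ every map $X\to A$ is constant'', I would take $f\colon X\to A$ and, after translating in $A$, assume $f$ sends the base point to the origin. The crucial step is to extend $f$ to a morphism $\overline{f}\colon\overline{X}\to A$: this uses that an abelian variety contains no rational curves, so that a rational map from the smooth variety $\overline{X}$ to $A$ has empty indeterminacy locus (Weil's extension theorem). Then $b_1(\overline{X})=0$ makes $\Alb(\overline{X})$ trivial, so by the universal property $\overline{f}$ factors through a point; hence $\overline{f}$, and a fortiori $f$, is constant.

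The step I expect to be the main obstacle is exactly this extension of $f$ across the boundary divisor $D=\overline{X}\setminus X$: it is where one genuinely uses the geometry of abelian varieties (absence of rational curves, hence absence of indeterminacy for rational maps into $A$) rather than a soft cohomological argument, and it is also the point where the distinction between holomorphic and algebraic maps has to be addressed. Once the extension is in hand, what remains is formal: the numerical identity $\dim\Alb(\overline{X})=\tfrac12 b_1(\overline{X})$ from Hodge theory together with the universal property of the Albanese.
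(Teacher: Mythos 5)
Your proposal is correct and takes essentially the same route as the paper: both arguments hinge on the Albanese variety of a smooth compactification, the identity $\dim\operatorname{Alb}(\overline{X})=b_1(\overline{X})/2$, and the extension of a map to an abelian variety across the boundary divisor (the paper cites Lange--Birkenhake for this extension, where you justify it via the absence of rational curves in $A$). The only difference is that you spell out a few routine details (density of $X$ in $\overline{X}$, the GAGA point) that the paper leaves implicit.
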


\begin{proof}
Recall that $\overline{X}$ has an Albanese variety, which is an abelian variety
$\operatorname{Alb}$ with a map $\operatorname{alb}:\overline{X}\rightarrow \operatorname{Alb}$ 
through which every map to an abelian variety factors. Its dimension
is exactly $h^{0,1}(\overline{X})=b_1(\overline{X})/2$.

If $b_1(\overline{X})>0$ then $\operatorname{alb}$ restricts to a map $f:X\rightarrow\operatorname{Alb}$.
Conversely given a nonconstant map $f:X\rightarrow A$ then first $f$ extends to $\overline{X}$
(see \cite[Theorem~9.4 chapter~4]{LangeBirkenHake}), then factors through
$\operatorname{alb}$ and so $\operatorname{Alb}$ must be of positive dimension.
\end{proof}

\subsubsection{Rational homotopy theory}
We explain very briefly the ideas we need. We refer to \cite{GriffithsMorgan}
and \cite{Morgan}.

Let $A^\bullet$ be a (commutative) differential graded algebra over a field $k=\RR$ or $\CC$: 
this means that $A$ has a grading $A=\oplus_{i\geq 0} A^i$, a
multiplication $\wedge:A^i\otimes A^j\rightarrow A^{i+j}$ 
with $\alpha\wedge\beta=(-1)^{ij}\beta\wedge\alpha$ if $\alpha\in A^i$,$\beta\in A^j$,
and a derivation of degree $1$ with $d(\alpha\wedge\beta)=d\alpha\wedge\beta+(-1)^i \alpha\wedge d\beta$.
We denote by $A^{+}:=\bigoplus_{i>0} A^i$.

There is a notion of \emph{$1$-minimal algebra} but we won't need the details.
Briefly it means that $A$ is obtained as an increasing union of elementary extensions, with
in addition $A^0=k$ and $d$ is \emph{decomposable}
which means $d(A)\subset A^{+}\wedge A^{+}$.

A \emph{$1$-minimal model} for $A$ is a $1$-minimal differential algebra $\mathscr{N}$ with a morphism
$\nu:\mathscr{N}\rightarrow A$ which is a $1$-quasi-isomorphism. See \cite{GriffithsMorgan}
and \cite{Morgan} for more statements about existence and unicity.
Remark that if $\nu:\mathscr{N}\rightarrow A$ is a $1$-minimal model over $\RR$
then $\nu_\CC:\mathscr{N}_\CC\rightarrow A_\CC$ is a $1$-minimal model over $\CC$.

\subsubsection{Hodge structures on minimal models}
We recall the work of Morgan \cite{Morgan}. The goal is to put mixed Hodge structures on
several rational homotopy invariants of algebraic varieties, however we only need to study the $1$-minimal model.

Let $X$ be a smooth complex quasi-projective variety admitting a compactification $\overline{X}$ by a divisor with normal crossings $D$.
We denote as always by $\aab(X, \RR)$ the algebra of real-valued differential forms on $X$,
and by $\mathscr{E}^\bullet(\log D)$ the algebra of $\mathcal{C}^\infty$ complex-valued forms on $X$
with logarithmic poles along $D$, with its filtration $W$ by the order of poles. There is a canonical map
$\mathscr{E}^\bullet(\log D)\hookrightarrow\aab(X, \CC)$ that is a quasi-isomorphism.

In addition, Morgan constructs a real algebra $E^\bullet_{\mathcal{C}^\infty}(X)$, see \cite[Section~2]{Morgan} for details,
with a filtration $W$ similar to the weight filtration on $\mathscr{E}(\log D)$, and 
constructs a
quasi-isomorphism $E_{\mathcal{C}^\infty}(X)\otimes\CC \rightarrow \mathscr{E}(\log D)$ that respects $W$.
Now let $A:=E_{\mathcal{C}^\infty}(X)$. Recall the Dec-weight filtration defined by
\[ \Dec W_i (A^n) = \{x\in W_{i-n}(A) \ |\ dx\in W_{i-n-1}(A^{n+1}) \} \]
such that for the spectral sequence
\[{}_W E_r^{p,q}= {}_{\Dec W}E_{r-1}^{-q,p+2q}\]
and on cohomology
\[ \Dec W_i H^n(A)=W_{i-n} H^n(A) .\]
Morgan proves that $A$ has a real minimal model
$\nu:\mathscr{N}\rightarrow A$ with a filtration $W$ such that $\nu$ respects the filtrations
$(\mathscr{N},W)\rightarrow (A,\Dec W)$; of course by transitivity $(\mathscr{N}_\CC,W)$
is a also a filtered (complex) minimal model for $(\mathscr{E}(\log D),W)$.

Over $\CC$, the weight filtration on $\mathscr{N}_\CC$ splits and
we will denote by lower indices the grading by the weight, that is compatible
with the grading of $\mathscr{N}$ as differential graded algebra. So $\mathscr{N}_\CC=\bigoplus_{i\geq 0}\mathscr{N}_i$, with
$d:\mathscr{N}_i \rightarrow \mathscr{N}_i$ and $\mathscr{N}_i \wedge\mathscr{N}_j\subset\mathscr{N}_{i+j}$.
The grading has the following properties:
\begin{itemize}
 \item Via the $1$-quasi-isomorphism $\nu$, the grading by weight induced on $H^n(\mathscr{N}_\CC)$
coincide with the grading induced on $H^n(X, \CC)$ by the Deligne splitting of its mixed Hodge structure.
\item For $n=1$ the only possible weights induced are $1,2$ and for $n=2$ they are $2,3,4$.
\item Each component $\mathscr{N}_i^j$ is of finite dimension.
\item $\mathscr{N}_0=\CC$ and $\mathscr{N}^0=\CC$.
\item $d(\mathscr{N}_1^1)=0$
(if $x\in\mathscr{N}_1^1$ then as $d$ is decomposable, $dx=\sum \alpha_i\wedge\beta_i\in\mathscr{N}_1^2$, so for each
$i$ we must have $\alpha_i\in\mathscr{N}_1^1$ and $\beta_i\in\mathscr{N}_0^1$ or the contrary; but
$\mathscr{N}_0=\CC$ concentrated in degree $0$ so $\mathscr{N}_0^1=0$).
\end{itemize}

\section{Equivariant constructions and proof of the main theorem}
\label{SectionEquivariantConstruction}
Now we rewrite the section \ref{subsubMainObjects} taking into account
a finite covering space and equivariance, as needed in \cite[section~15]{KapovichMillson}.
From now on we fix the objects we introduce: 
$X$ is a smooth complex quasi-projective variety 
with a base point $x$
and fundamental group $\Gamma$. We fix a linear algebraic group $G$
over $\RR$ with Lie algebra $\gg$ and a representation $\rho:\Gamma\rightarrow G(\RR)$
with finite image. We introduce the finite group $\Phi:=\Gamma/\Ker(\rho)\simeq\rho(\Gamma)$.

\subsection{Covering spaces}

\subsubsection{Covering spaces and compactifications}

To $\Ker(\rho)$ corresponds a finite étale Galois cover $\pi:Y\rightarrow X$
with automorphism group $\Phi$ that acts simply transitively.
Automatically $Y$ is a smooth quasi-projective variety.

It is possible to choose compactifications $\overline{Y},\overline{X}$, both smooth by divisors with normal crossings,
in an equivariant way. This means that $\pi$ extends to a finite ramified cover $\overline{Y}\rightarrow \overline{X}$
over $\overline{X}\setminus X$.
\[ \xymatrix{
\Phi\curvearrowright  Y \ar@{^{(}->}[r] \ar[d]_\pi & \overline{Y} \ar[d]^\pi  \curvearrowleft\Phi  \\
X \ar@{^{(}->}[r] & \overline{X}  } \]
The action of $\Phi$ extends to $\overline{Y}$ with $\pi$ being invariant, so
leaves globally invariant $\overline{Y}\setminus Y$; and $Y$ comes equipped with a base point $y$ over $x$.

\subsubsection{Bundles and augmentations}
We construct the bundle $\Ad(P)$ taking into account the augmentation.

First fix some notations: if $E$ is a flat bundle, there is 
a twisted algebra of differential forms with value in $E$ denoted by $\aab(X, E)$.
If $E$ is globally trivial this is just $\aab(X)\otimes E$ where we write $\aab(X)$ for $\aab(X,\RR)$.
Given a group $\Phi$ acting on an algebra $A$ (which can be graded, commutative, Lie\ldots
and the action must respect this structure)
we always denote by $A^\Phi$ the sub-algebra of invariants by $\Phi$.
Given an augmentation of $A$, we always denote by $A_0$ the kernel of the augmentation.

Introduce the trivial bundle $Q:=Y \times G$ and its adjoint bundle
$\Ad(Q):=Y\times\gg$.
Recall that $\Phi$ acts on $Y$ with $Y/\Phi=X$; on $G$ (via $\rho$)
and on $\gg$ (via $\Ad\circ\rho$); and also on $\aab(Y)$
with $\aab(Y)^\Phi=\aab(X)$. It also acts naturally on products and tensor products.

So: we have $P=Q/\Phi$ and $\Ad(P)=\Ad(Q)/\Phi$, and for the twisted version
\[ \aab(X, \Ad(P)) = (\aab(Y, \Ad(P')))^\Phi = (\aab(Y)\otimes\gg)^\Phi. \]

We want to lift the augmentation
$\beta:\aab(X)\rightarrow\RR$, which is the evaluation of $0$-forms at $x$, to $Y$. We let
\[ \beta_Y(f) := \frac{1}{|\Phi|} \sum_{\gamma\in\Phi} (\gamma.f)(y) \]
for $f\in\mathcal{A}^0(Y)$.
Thus we sum over all of $\pi^{-1}(x)$.
Then naturally 
\[ \aab(X)_0 = (\aab(Y)^\Phi)_0 = (\aab(Y)_0)^\Phi \]
(the first two augmentations are with respect to $\beta$, the last one to $\beta_Y$)
and we can write it $\aab(Y)_0^\Phi$.

In the same way we want to lift $\epsilon:\aab(X, \Ad(P))\rightarrow \gg$
to $\aab(Y)\otimes\gg$.
Put 
\[ \epsilon_Y(f\otimes u):=\frac{1}{|\Phi|} \sum_{\gamma\in\Phi}
(\gamma.(f\otimes u))(y). \]
Then naturally
\begin{equation}
\label{EqEquivarianceAugmentation}
\aab(X,\Ad(P))_0 = ((\aab(Y)\otimes\gg)^\Phi)_0
 = ((\aab(Y)\otimes\gg)_0)^\Phi = (\aab(Y)_0 \otimes\gg)^\Phi
\end{equation}
and we can denote all this by $(\aab(Y)\otimes\gg)_0^\Phi$.

Observe that all theses constructions extend naturally to $\CC$.

\begin{Lem}[See {\cite[Lemma 5.12]{GoldmanMillson}}]
\label{LemSurjectiviteAugmentations}
The four augmentations defined above are surjective.
This implies that $H^0(\aab(X,\RR)_0)=0$ and $H^0(\aab(X, \Ad(P))_0)=0$.
\end{Lem}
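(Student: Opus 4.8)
The plan is to establish surjectivity by exhibiting an explicit preimage for each of the four augmentations, and then to read off the two vanishing statements formally from short exact sequences of complexes. This follows the scheme of \cite[Lemma~5.12]{GoldmanMillson}; the only additional work is the bookkeeping imposed by the cover $\pi\colon Y\to X$ and the $\Phi$-action.

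Surjectivity of $\beta$ and $\beta_Y$ is immediate from constant functions: the constant $c$ on $X$ maps to $c$ under $\beta$, and the constant $c$ on $Y$ maps to $\tfrac{1}{|\Phi|}\sum_{\gamma\in\Phi}c=c$ under $\beta_Y$. For $\epsilon$, the cleanest route is to observe that in degree $0$ the algebra $\aab(X,\Ad(P))$ is the space of $\Cinf$ sections of the vector bundle $\Ad(P)$ and $\epsilon$ is evaluation at $x$ into the fibre $\Ad(P)_x\cong\gg$; a local frame near $x$ multiplied by a bump function produces, for any $u\in\gg$, a global section with value $u$ at $x$. For $\epsilon_Y$, fix $u\in\gg$: since $\Phi$ acts simply transitively on the fibre $\pi^{-1}(x)=\Phi\cdot y$, this fibre is a finite set of distinct points of $Y$, so I can choose $f\in\mathcal{A}^0(Y)$ with $f(y)=1$ vanishing near every other point $\gamma^{-1}.y$, and then only the term $\gamma=e$ survives in the defining sum, so that $\epsilon_Y\bigl(f\otimes(|\Phi|u)\bigr)=u$. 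One can also deduce surjectivity of $\epsilon$ directly from that of $\epsilon_Y$: the linear functional $\epsilon_Y$ is $\Phi$-invariant (reindex $\gamma\mapsto\gamma\delta$ in its definition), so replacing the preimage $f\otimes(|\Phi|u)$ by its $\Phi$-average, which lies in $(\aab(Y)\otimes\gg)^\Phi=\aab(X,\Ad(P))$, yields an invariant preimage of $u$.

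With surjectivity in hand I obtain short exact sequences of complexes
\begin{gather*}
0\longrightarrow\aab(X,\RR)_0\longrightarrow\aab(X,\RR)\xrightarrow{\ \beta\ }\RR\longrightarrow 0,\\
0\longrightarrow\aab(X,\Ad(P))_0\longrightarrow\aab(X,\Ad(P))\xrightarrow{\ \epsilon\ }\gg\longrightarrow 0,
\end{gather*}
where $\RR$ and $\gg$ sit in degree $0$ with zero differential. Since $X$ is connected, $H^0(\aab(X,\RR))=\RR$ and the induced map to $\RR$ is the identity; and $H^0(\aab(X,\Ad(P)))$ is the space of flat sections of $\Ad(P)$, namely $\gg^{\rho(\Gamma)}=\gg^{\Phi}$, which injects into $\gg$ by evaluation at $x$ since a flat section is determined by its value at one point. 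In both cases the augmentation induces an \emph{injection} on $H^0$, so the long exact cohomology sequence forces $H^0(\aab(X,\RR)_0)=0$ and $H^0(\aab(X,\Ad(P))_0)=0$. Equivalently, without the long exact sequence: an element of one of these $H^0$'s is a closed, resp.\ parallel, $0$-form vanishing at the base point, hence constant, resp.\ parallel, hence $0$.

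I do not expect a genuine obstacle; the one point deserving care is the equivariant part, i.e.\ producing a \emph{$\Phi$-invariant} preimage under $\epsilon$. This is dispatched either by the bump-function-times-local-frame construction on $X$, or on $Y$ by averaging over $\Phi$, once one has recorded that $\pi^{-1}(x)$ consists of $|\Phi|$ distinct points and that $\epsilon_Y$ is $\Phi$-invariant as a linear functional. Everything else is routine.
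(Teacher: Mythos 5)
Your proof is correct. The paper itself gives no argument for this lemma --- it simply defers to \cite[Lemma~5.12]{GoldmanMillson} --- and what you write is the standard argument that reference contains, correctly adapted to the equivariant setting: constants for $\beta$ and $\beta_Y$, a bump function times a local frame (or a $\Phi$-average of a preimage under $\epsilon_Y$) for $\epsilon$, a function supported near $y$ and vanishing at the other $|\Phi|-1$ points of $\pi^{-1}(x)$ for $\epsilon_Y$, and the observation that a closed (resp.\ flat) $0$-form vanishing at the base point of a connected manifold is zero. The only point genuinely beyond Goldman--Millson is the $\Phi$-invariance bookkeeping, which you handle correctly and which is consistent with the identifications in equation~(\ref{EqEquivarianceAugmentation}).
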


\subsubsection{Cohomology}

We give a usefull lemma concerning the relation between cohomology and invariants.

\begin{Lem}
Let $A^\bullet$ be a differential graded commutative algebra (or Lie algebra) over a field of characteristic zero.
Let $\Phi$ be a finite group acting on $A^\bullet$. Then on cohomology
\[ H^\bullet(A^\Phi) = (H^\bullet(A))^\Phi. \]
\end{Lem}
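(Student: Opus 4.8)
The plan is to exploit the fact that over a field of characteristic zero, taking $\Phi$-invariants is an exact functor, so it commutes with cohomology. More precisely, the averaging operator $e := \frac{1}{|\Phi|}\sum_{\gamma\in\Phi}\gamma$ is a projector from $A^\bullet$ onto $A^\Phi$; since $\Phi$ acts by automorphisms of the differential graded structure, $e$ commutes with the differential $d$. First I would observe that this makes $A^\Phi$ a sub-complex of $A^\bullet$, and that $A^\bullet = A^\Phi \oplus (1-e)A^\bullet$ as a direct sum of complexes (both summands are $d$-stable because $d$ commutes with $e$). Cohomology commutes with finite direct sums of complexes, so $H^\bullet(A^\bullet) = H^\bullet(A^\Phi) \oplus H^\bullet((1-e)A^\bullet)$.

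Next I would identify the $\Phi$-action induced on $H^\bullet(A^\bullet)$: since $e$ is a chain map, it induces a projector $e_*$ on $H^\bullet(A^\bullet)$ whose image is exactly $(H^\bullet(A^\bullet))^\Phi$ (again because we are in characteristic zero, so $H^\bullet(A^\bullet)^\Phi = e_* H^\bullet(A^\bullet)$). On the other hand, the decomposition of the previous paragraph shows $e_* H^\bullet(A^\bullet) = H^\bullet(e A^\bullet) = H^\bullet(A^\Phi)$, where I am using that $H^\bullet$ of a direct summand complex is the corresponding direct summand of the cohomology, and that the natural map $H^\bullet(A^\Phi) \to H^\bullet(A^\bullet)$ is injective with image $H^\bullet(A^\Phi)$ viewed inside the direct sum. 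Combining the two identifications gives $H^\bullet(A^\Phi) = (H^\bullet(A^\bullet))^\Phi$, and one checks this identification is the natural map induced by the inclusion $A^\Phi \hookrightarrow A^\bullet$.

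Alternatively, and perhaps more cleanly, I would argue directly: the inclusion $\iota : A^\Phi \hookrightarrow A^\bullet$ induces $\iota_* : H^\bullet(A^\Phi) \to H^\bullet(A^\bullet)$, whose image obviously lies in $(H^\bullet(A^\bullet))^\Phi$. For surjectivity onto the invariants, given a $d$-closed $a \in A^\bullet$ whose class is $\Phi$-invariant, the class of $ea$ equals $e_*[a] = [a]$ (as $[a]$ is invariant), and $ea \in A^\Phi$, so $[a] = \iota_*[ea]$. For injectivity, if $a \in A^\Phi$ is $d$-closed and $a = db$ for some $b \in A^\bullet$, then $a = ea = d(eb)$ with $eb \in A^\Phi$, so $[a] = 0$ already in $H^\bullet(A^\Phi)$. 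This handles both the commutative and the Lie case uniformly, since the only structure used is that $\Phi$ acts by differential graded (Lie) algebra automorphisms — in particular $d$-equivariantly — which guarantees $e$ is a chain map onto the invariants.

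The argument has essentially no obstacle; the only point requiring the characteristic-zero hypothesis is the existence of the averaging projector $e$, and the only mild subtlety is making sure the identification $H^\bullet(A^\Phi) \cong (H^\bullet(A^\bullet))^\Phi$ is the natural one induced by $\iota$ rather than some abstract isomorphism, which the direct computation in the previous paragraph settles. I would write up the direct version as the proof.
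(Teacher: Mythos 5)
Your proof is correct and follows essentially the same route as the paper's: both rest on the averaging operator $\frac{1}{|\Phi|}\sum_{\gamma\in\Phi}\gamma$, which is a chain map onto the invariants, and your ``direct version'' (surjectivity by averaging a representative of an invariant class, injectivity by averaging a primitive) is exactly the paper's argument, with the injectivity step spelled out where the paper leaves it as ``this proves easily that the natural map is an isomorphism.'' Nothing further is needed.
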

\begin{proof}
An element in $(H^i(A))^\Phi$ is given by an element $x\in A^i$ with $dx=0$ such that for all
$\gamma\in\Phi$ there exists an element $\alpha_\gamma\in A^{i-1}$ with
$\gamma.x = x+d(\alpha_\gamma)$.
A cohomology class in $H^i(A^\Phi)$ is given by an element $z\in A^i$
which is $\Phi$-invariant and $dz=0$.
So there is a natural map $H^i(A^\Phi)\rightarrow (H^i(A))^\Phi$
taking $z\in H^i(A^\Phi)$ to an element in $(H^i(A))^\Phi$
with $\alpha_\gamma=0$ for all $\gamma$.
In the other direction, given $x \in (H^i(A))^\Phi$ we put
\[ z := \frac{1}{|\Phi|} \sum_{\gamma\in\Phi} \gamma.x = x+d\left(\sum_{\gamma\in\Phi}\alpha_\gamma\right) \]
so that $dz=0$ and $z$ is $\Phi$-invariant so is in $H^i(A^\Phi)$ and induces $x$.

Now this proves easily that the natural map is an isomorphism.
\end{proof}

\begin{Cor}
\label{CorQuasiIsomorphismEquivariant}
Let $L,L'$ be differential graded Lie algebras, $\psi:L\rightarrow L'$ a $1$-quasi-isomorphism.
Suppose that a finite group $\Phi$ acts on $L$, $L'$ and $\psi$ commutes with the actions.
Then $\psi$ induces a $1$-quasi-isomorphism $\psi^\Phi : L^\Phi\rightarrow L'^\Phi$.
\end{Cor}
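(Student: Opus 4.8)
The plan is to reduce everything to the previous lemma, which identifies the cohomology of an invariant subalgebra with the invariants of the cohomology. Since $\psi$ commutes with the $\Phi$-actions, it restricts to a morphism of differential graded Lie algebras $\psi^\Phi:L^\Phi\to L'^\Phi$; the only thing to check is that this restricted morphism is again a $1$-quasi-isomorphism, i.e. that it induces isomorphisms on $H^0$ and $H^1$ and an injection on $H^2$.

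First I would invoke the preceding lemma to get canonical identifications $H^i(L^\Phi)=(H^i(L))^\Phi$ and $H^i(L'^\Phi)=(H^i(L'))^\Phi$ for all $i$, and I would note that these identifications are natural in the sense that the square formed by $\psi$ on cohomology and $\psi^\Phi$ on cohomology commutes; concretely, both the natural map $H^i(A^\Phi)\to(H^i(A))^\Phi$ and the averaging map in the other direction are compatible with any $\Phi$-equivariant morphism of algebras. Hence, under these identifications, the map $H^i(\psi^\Phi)$ is precisely the restriction of $H^i(\psi):H^i(L)\to H^i(L')$ to the $\Phi$-invariant subspaces.

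Then I would use the fact that taking $\Phi$-invariants is an exact functor on vector spaces over a field of characteristic zero (it is a direct summand, cut out by the averaging idempotent $\frac{1}{|\Phi|}\sum_{\gamma\in\Phi}\gamma$). Applying this exactness: since $H^0(\psi)$ and $H^1(\psi)$ are isomorphisms, their restrictions to the invariants are isomorphisms; and since $H^2(\psi)$ is injective, its restriction to the invariant subspace is still injective (a restriction of an injective linear map is injective — exactness of invariants is not even needed for this last point, only for knowing that the restriction lands in and is computed on the invariant subspaces). Therefore $\psi^\Phi$ is a $1$-quasi-isomorphism.

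There is essentially no obstacle here; the one point that requires a little care is the naturality claim in the first step, namely that the isomorphism $H^\bullet(A^\Phi)\cong(H^\bullet(A))^\Phi$ of the previous lemma is functorial in $\Phi$-equivariant maps $A\to A'$. This follows immediately from the explicit description of the isomorphism given in that lemma's proof — both the inclusion-induced map and its averaging inverse are visibly compatible with $\Phi$-equivariant morphisms — so I would just remark on it rather than belabor it.
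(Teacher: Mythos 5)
Your argument is correct and is exactly the intended deduction: the paper gives no separate proof of this corollary, leaving it as an immediate consequence of the preceding lemma $H^\bullet(A^\Phi)=(H^\bullet(A))^\Phi$ together with the naturality of that identification and the exactness of taking $\Phi$-invariants in characteristic zero. Your write-up simply makes explicit the steps the paper treats as evident.
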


\subsubsection{Equivariant minimal model}
We refer to \cite[section~15]{KapovichMillson} for this technical part.
We denote by $A^\bullet$ the algebra $E^\bullet_{\mathcal{C}^\infty}(Y)$ with its filtration $W$.
It is shown that $\Phi$ acts on $(A^\bullet, W)$
and $(A^\bullet)^\Phi$ is then quasi-isomorphic to $E^\bullet_{\mathcal{C}^\infty}(X)$.
It is shown how to construct a $1$-minimal model $\nu:\mathscr{N}\rightarrow A$ with a filtration $W$
and an action of $\Phi$ which commutes with $\nu$; and $\nu$
respects the filtration $\Dec W$ on $A$. So
(corollary \ref{CorQuasiIsomorphismEquivariant})
$\mathscr{N}^\Phi$ is a $1$-minimal model for $A^\Phi=E^\bullet_{\mathcal{C}^\infty}(X)$.

Also, by transitivity and over $\CC$, $\mathscr{N}_\CC$ is a $1$-minimal model for $\mathcal{A}(Y,\CC)$
and $\mathscr{N}_\CC^\Phi$ is a $1$-minimal model for $\mathcal{A}(X, \CC)$.
The filtration $W$ on $\mathscr{N}_\CC$ splits and $\Phi$ commutes with the splitting.

\subsection{Minimal model for a Lie algebra}
\label{SecMainTheorem}

We are now able to describe an explicit differential graded Lie algebra
which controls the germ of $\rho$ in $\Repr$, and which comes with a grading by weight.
Everyting is done in \cite[section 15]{KapovichMillson}.

\subsubsection{The minimal model}
Consider $\mathscr{G}:=\mathscr{N}\otimes\gg$ and put
$\nu\otimes\id:\mathscr{G}\rightarrow A\otimes\gg$.
Then $\mathscr{G}$ is a differential graded Lie algebra with a differential $d$ decomposable,
and $\nu\otimes\id$ is a $1$-quasi-isomorphism.
We can call $\mathscr{G}$ a \emph{$1$-minimal model in the sense of Lie algebras}.

The grading by weight on 
$\mathscr{N}_\CC$ induces one on $\mathscr{G}_\CC$ with the properties that:
\begin{itemize}
\item $\mathscr{G}_\CC = \bigoplus_{i\geq 0} \mathscr{G}_i$.
\item Each $\mathscr{G}_i^j$ is finite dimensional.
\item $[\mathscr{G}_i, \mathscr{G}_j]\subset\mathscr{G}_{i+j}$.
\item $d(\mathscr{G}_i)\subset\mathscr{G}_i$, so the cohomology is also graded.
\item $\mathscr{G}_0=\gg_\CC$ and $\mathscr{G}^0_\CC=\gg_\CC$.
\item The only non-zero induced weights on $H^n(\mathscr{G}_\CC)$
are $1,2$ for $n=1$ and $2,3,4$ for $n=2$.
\end{itemize}
The group $\Phi$ acts on both factors of $\mathscr{G}$ and preserve the graduation on $\mathscr{G}_\CC$.
Put $\mathscr{M}:=\mathscr{G}^\Phi$, so that $\mathscr{M}_\CC$ has a bigrading with the same properties;
by transitivity
$\mathscr{M}_\CC$ is $1$-quasi-isomorphic to $\aab(X, \Ad(P)_\CC)$.

\subsubsection{Augmentation}
Recall the augmentations $\beta,\epsilon,\beta_Y,\epsilon_Y$, extend them over $\CC$.
It is easy to pull them back respectively to $A_\CC$, $A_\CC \otimes\gg$,
$(A_\CC)^\Phi$, $(A_\CC\otimes\gg)^\Phi$ and then to
$\mathscr{N}_\CC$, $\mathscr{G}_\CC$, $\mathscr{N}_\CC^\Phi$,
$\mathscr{G}_\CC^\Phi$ all in a compatible way:
$\mathscr{N}_{\CC,0}$
(warning with the notations, this is the kernel of the augmentation
and $\mathscr{N}_0$ is the degree zero graded part by weight on $\mathscr{N}_\CC$)
is then $1$-quasi-isomorphic to $\mathcal{A}(Y,\CC)_0$
such that $ (\mathscr{N}_{\CC,0})^\Phi = (\mathscr{N}_\CC^\Phi)_0 $
and $\mathscr{G}_{\CC,0}$ is $1$-quasi-isomorphic to $\mathcal{A}(X,\Ad(P)_\CC)_0$
such that $ (\mathscr{G}_{\CC,0})^\Phi = (\mathscr{G}_\CC^\Phi)_0 $;
and we denote this last one by $\mathscr{L}$.

Recall that when constructing a minimal model $\mathscr{N}\rightarrow A$, $\mathscr{N}^0$ is sent isomorphically
to $H^0(A)$.
Combining this with lemma~\ref{LemSurjectiviteAugmentations}, and with the various compatibilities of the augmentations, we
see that $\mathscr{L}^0=0$. Furthermore $\mathscr{L}_0=0$.

\subsection{The controlling algebra and proof of the main theorem}

\subsubsection{The controlling algebra}

By the theorem~\ref{TheGoldmanMillsonQuasiIsomorphism}
and the remark~\ref{RemL0} the analytic germ of $\rho$ in $\Repr$
is isomorphic (over $\CC$) to the one at $0$ in $\mathscr{L}^1$ of the equation $d\eta+\frac{1}{2}[\eta,\eta]=0$.
So $\mathscr{L}$ is the controlling algebra to our problem.

We can simplify it as in \cite[section 15]{KapovichMillson}.
Put $\mathscr{I}:=\mathscr{L}_4^1 \oplus d(\mathscr{L}_4^1) \oplus\bigoplus_{i\geq5}\mathscr{L}_i$, observe
that it is and ideal in $\mathscr{L}$ (in the sense of differential graded Lie algebras
with an additional grading by weight, that is:
bigraded homogeneous, stable by $d$, and stable by Lie bracket with $\mathscr{L}$)
such that the projection $\mathscr{L}\rightarrow Q:=\mathscr{L}/\mathscr{I}$ is
a $1$-quasi-isomorphism.
This $Q$ is simpler to study because it has all the properties of $\mathscr{G}_\CC$ above and in addition
$Q_i=0$ for $i\geq 5$ and $Q_4^1=0$.

\subsubsection{Proof of the main theorem}

\begin{The}
If we assume that the compactification $\overline{Y}$ has $b_1(\overline{Y})$=0, then
the algebra $Q$ above controls a quadratic germ.
\end{The}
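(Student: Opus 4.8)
The plan is to show that under the hypothesis $b_1(\overline{Y})=0$, the weight filtration on $Q$ collapses drastically, forcing $Q$ to be $1$-quasi-isomorphic to a differential graded Lie algebra with zero differential whose underlying graded object is concentrated in weight $1$ in degree $1$; by Lemma~\ref{LemAnalyticGermRC} and Remark~\ref{RemL0} this yields a quadratic germ. The starting point is that $b_1(\overline{Y})=0$ means the mixed Hodge structure on $H^1(Y)$ is pure of weight $2$. Since the grading by weight on $H^\bullet(\mathscr{N}_\CC)$ matches the Deligne splitting on $H^\bullet(Y,\CC)$, this says $H^1(\mathscr{N}_\CC)$ is pure of weight $2$, i.e. $H^1(\mathscr{N}_\CC)=H^1_2(\mathscr{N}_\CC)$ and $H^1_1(\mathscr{N}_\CC)=0$. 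Passing to $\Phi$-invariants (the weight grading commutes with the $\Phi$-action) and tensoring with $\gg_\CC$, the same holds for $\mathscr{L}$ and hence for $Q$: we get $H^1(Q)=H^1_2(Q)$.

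First I would analyze the low-weight part of $Q$ in degree $1$. We know $Q_0=0$ (from $\mathscr{L}_0=0$), $Q^0=0$ (from $\mathscr{L}^0=0$), $Q_i=0$ for $i\geq 5$, and $Q_4^1=0$; so $Q^1=Q_1^1\oplus Q_2^1\oplus Q_3^1$. Because $d$ is decomposable and $Q^0=0$, we get $d(Q_1^1)=0$ exactly as in the displayed computation for $\mathscr{N}_1^1$ in the excerpt. Now $H^1_1(Q)=0$ together with $d(Q_1^1)=0$ forces $Q_1^1\subset d(Q^0)=0$, so in fact $Q_1^1=0$. Next I would show $Q_1=0$ entirely: since $Q$ is generated in a minimal-model sense from its low-degree part and $Q_1^0\subset Q_0=0$ and $Q_1^1=0$, an induction on degree using decomposability of $d$ and $[Q_i,Q_j]\subset Q_{i+j}$ shows every $Q_1^j=0$. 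Likewise $Q^0=0$ kills degree-$0$ pieces. So after these reductions the weight-$1$ part of $Q$ vanishes and the only generators in degree $1$ live in weights $2$ and $3$.

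The heart of the argument is then to further collapse weights $3$ (and the residual weight-$4$ and degree-$2$ data) so that the controlling algebra is quasi-isomorphic to something purely quadratic. The model to imitate is exactly the Goldman–Millson/Kapovich–Millson formalism: one wants the analogue of "formality in weight $1$", namely that $\mathscr{L}$ (equivalently $Q$) is $1$-quasi-isomorphic to $H^1(Q)$ placed in degree $1$ with the induced bracket and zero differential, together with $H^2(Q)$ in degree $2$ as obstruction space, and with all of this carried in a single weight — weight $2$ in degree $1$ and weight $4$ in degree $2$, after rescaling. Concretely I would argue: the weight grading makes $Q$ into a \emph{weighted} dgla; a weighted dgla whose weight-$1$ part in the relevant degrees vanishes and whose degree-$1$ cohomology is pure (all in one weight, here weight $2$) can be formalized via the standard homotopy-transfer / decomposition-of-the-differential argument, producing a minimal $L_\infty$ (in fact dg, by the weight bounds on $H^2$) model with generators of weight $2$ in degree $1$ and relations of weight $4$ in degree $2$. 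Rescaling weights by $\tfrac12$ turns this into generators of weight $1$ and relations of weight $2$, i.e. a quadratic cone. Invoking Theorem~\ref{TheGoldmanMillsonQuasiIsomorphism}, Remark~\ref{RemL0} (using $\mathscr{L}^0=0$, hence $Q^0=0$, and finite-dimensionality of $Q^1$), and Lemma~\ref{LemAnalyticGermRC} to descend from $\CC$ to $\RR$, finishes the proof.

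The main obstacle I anticipate is making the "weight forces formality" step precise: one must check that the vanishing of the weight-$1$ summand in degrees $0,1$ and the purity $H^1(Q)=H^1_2(Q)$ are enough to split the inclusion of cocycles and the projection to cohomology \emph{compatibly with both gradings}, and that the resulting transferred structure has no nontrivial higher brackets landing outside weight $4$ in degree $\le 2$ — this is where the bounds $Q_i=0$ for $i\ge 5$, $Q_4^1=0$, and "weights on $H^2$ are $2,3,4$" get used, and one must rule out a possible weight-$3$ obstruction surviving. If a direct transfer argument is awkward, the alternative is to mimic \cite[section~15]{KapovichMillson} verbatim and simply observe that each place where they obtain weight-$3$ or weight-$4$ contributions, those contributions factor through $H^1_1(Q)=0$ and therefore vanish under our hypothesis, leaving only the quadratic part.
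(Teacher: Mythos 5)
Your setup is correct and matches the paper: purity of the mixed Hodge structure on $H^1(Y)$ gives $H^1(Q)=H^1_2(Q)$ after tracing through the weight-compatible $1$-quasi-isomorphisms, and the combination $H^1(Q_1)=0$, $d(Q_1^1)=0$, $Q_1^0=0$ does force $Q_1^1=0$. But the heart of your argument --- the ``weight forces formality'' homotopy-transfer step that is supposed to collapse the weight-$3$ generators and produce a purely quadratic model --- is not actually carried out, and you explicitly flag the unresolved issue yourself (``one must rule out a possible weight-$3$ obstruction surviving''). That is precisely the point where the proof has to do work, so as written there is a genuine gap. Moreover, your fallback suggestion attributes the vanishing of the weight-$3$ contributions to $H^1_1(Q)=0$, which is not the relevant fact: what kills them is $H^1_3(Q)=0$ (the other half of purity), together with $Q_3^0=0$.

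The paper avoids formality and homotopy transfer entirely by a direct, elementary computation that you should adopt. Since $Q^0=0$ and $Q^1$ is finite dimensional, Remark~\ref{RemL0} identifies the germ with the germ at $0$ of the Maurer--Cartan equation in $Q^1$. Write $\eta=\eta_2+\eta_3$ (no $\eta_1$ since $Q_1^1=0$, no $\eta_4$ since $Q_4^1=0$) and decompose $d\eta+\tfrac12[\eta,\eta]$ by weight: the weight-$2$ component is $d\eta_2$, the weight-$3$ component is $d\eta_3$, the weight-$4$ component is $\tfrac12[\eta_2,\eta_2]$, and all components of weight $\geq 5$ vanish because $Q_i=0$ for $i\geq 5$. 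The equation $d\eta_3=0$ says $\eta_3\in\Ker(d)\cap Q_3^1$, which equals $d(Q_3^0)=0$ because $H^1(Q_3)=0$; so the variable $\eta_3$ is eliminated. The equation $d\eta_2=0$ restricts $\eta_2$ to the linear subspace $Z_2^1:=\Ker(d)\cap Q_2^1$, and the only remaining relation is $\tfrac12[\eta_2,\eta_2]=0$, homogeneous of weight $4$ in a generator of weight $2$; halving the weights gives a quadratic cone, and Lemma~\ref{LemAnalyticGermRC} descends the conclusion from $\CC$ to $\RR$. No $L_\infty$ structure, splitting of cocycles, or induction showing $Q_1=0$ in all degrees is needed (and the latter claim is dubious for $Q$, which is a quotient of $\Phi$-invariants of $\mathscr{N}\otimes\gg$ rather than a free object).
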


\begin{proof}
By hypothesis $H^1(Y)$ is a pure Hodge structure of weight $2$.
Looking carefully at our $1$-quasi-isomorphisms
that preserve filtrations by weight and graduations over $\CC$, we see
that on $H^1(\mathscr{N}_i)$ the only nonzero induced weight is for $i=2$.
This special property is also true for $\mathscr{G}_\CC$ because
\[ H^\bullet(\mathscr{G}_i)=H^\bullet(\mathscr{N}_i)\otimes\gg \]
and also for $\mathscr{G}_{\CC,0}$ because
\[ H^\bullet(\mathscr{G}_{i,0})=H^\bullet((\mathscr{N}_i\otimes\gg)_0)
 = H^\bullet(\mathscr{N}_{i,0}\otimes\gg) = H^\bullet(\mathscr{N}_{i,0})\otimes\gg \]
and $\mathscr{N}_{\CC,0}$ is a subcomplex of $\mathscr{N}_\CC$, so all restrictions apply to it.

This restriction on weights holds for $\mathscr{L}$ because the action of $\Phi$ preserves the graduation and 
\[ H^\bullet(\mathscr{L}_i) = H^\bullet((\mathscr{G}_{i,0})^\Phi) = (H^\bullet(\mathscr{G}_{\CC,0}))^\Phi , \]
and by the $1$-quasi-isomorphism it holds for $Q$.

Now look at the equation $d\eta+\frac{1}{2}[\eta,\eta]$ in $Q^2$, for $\eta\in Q^1$.
By construction $Q^1=Q^1_1 \oplus Q^1_2 \oplus Q^1_3$ (and $Q^1_0=0$).
By our hypothesis $H^1(Q_1)=0$, which is
$\Ker(d:Q_1^1\rightarrow Q_1^2)/d(Q_1^0)$. Combined with the fact that $d(Q^1_1)=0$
and $Q_0^1=0$ we have $Q_1^1=0$.

So, decompose $\eta=\eta_2+\eta_3$ where $\eta_i$ is of weight $i$. The equation on $\eta$
becomes (we truncate parts of weight $\geq 5$)
\begin{align*}
d\eta_2 & = 0 \\
d\eta_3 & =0 \\
\frac{1}{2}[\eta_2,\eta_2] & = 0.
\end{align*}
Since we have $H^1(Q_3)=0$ and $d\eta_3=0$, $\eta_3$ must be exact. But a primitive 
must be in $Q_3^0$, which is $0$. So we can eliminate the equation $d\eta_3=0$.
Since $d\eta_2=0$, we can just assume $\eta_2$ is in the linear subspace $Z_2^1:=\Ker(d)\cap Q_2^1$
and it remains only the equation
\begin{equation}
\frac{1}{2}[\eta_2,\eta_2] = 0,\quad \eta_2\in Z_2^1
\end{equation}
which is weighted homogeneous, with the generator $\eta_2$ of weight $2$
and the relation of weight $4$.
But we can divide the degrees by two and this is isomorphic to a weighted homogeneous cone
with generators of weight $1$ and relations of weight $2$, that is a quadratic cone. 
\end{proof}

\section{Examples}
\label{SecExamples}

We now investigate several situations where we can apply our main theorem.

\subsection{Abelian coverings of line arrangements}
Motivated by the case of the trivial representation, the first example is to take for
$X$ the complement of an arrangement of hyperplanes in some complex projective space.
We reduce to the case of the projective plane, thus we denote by $\mathcal{L}$
a finite union of lines in $\mathbb{P}^2(\CC)$ and by $X$ its complement.
A smooth compactification of $X$ is obtained as a blow-up of $\mathbb{P}^2(\CC)$
at the points of intersection with multiplicity at least $3$ so has
clearly $b_1(\overline{X})=0$.

There is a special class of coverings of $\mathbb{P}^2(\CC)$ branched over $\mathcal{L}$ that has already been studied:
the \emph{Hirzebruch surfaces}. The definition appears first in
\cite{Hirzebruch},
and a study of the Betti numbers was done in \cite{Hironaka}.
See also \cite{Suciu} for a survey of these results.

For each integer $N$, we define $X_N(\mathcal{L})$ to be the covering of $X$ corresponding to the
morphism $\pi_1(X)\rightarrow H_1(X, \ZZ)\rightarrow H_1(X, \ZZ/N\ZZ)$.
It is known that if $n$ is the number of lines of the arrangement then
$H_1(X, \ZZ)$ is free of rank $n-1$, so $X_N(\mathcal{L})$
is a Galois cover of degree $N^{n-1}$.
It extends to a branched covering $\widehat{X}_N(\mathcal{L})$ over $\mathcal{L}$; 
$\widehat{X}_N(\mathcal{L})$ is a normal algebraic surface. 
We define the Hirzebruch surface associated to $\mathcal{L}$,
which we denote $M_N(\mathcal{L})$, to be a minimal desingularization of $\widehat{X}_N$
(see \cite{Hironaka} for more details).
There are various formulas for computing the Betti number $b_1(M_N(\mathcal{L}))$
and we will refer to Tayama \cite{Tayama}.

\begin{The}[{\cite[Theorem~1.2]{Tayama}}]
Define the function
\[ b(N, n) := (N-1)\left( (n-2)N^{n-2} - 2\sum_{k=0}^{n-3} N^k \right). \]
It is $b_1(M_N(\mathcal{O}_n))$ where $\mathcal{O}_n$
is an arrangement made of $n$ lines passing through a common point.
Let $m_r$ be the number of points of multiplicity $r$ of $\mathcal{L}$.
Let $\beta(\mathcal{L})$ be the number of braid sub-arrangements of $\mathcal{L}$.
Then
\begin{equation}
\label{EqTayamaBound}
b_1(M_N(\mathcal{L})) \geq \sum_{r \geq 3} m_r b(N, r) + \beta(\mathcal{L})b(N, 3).
\end{equation}
Furthermore the following conditions are equivalent:
\begin{itemize}
\item $\mathcal{L}$ is a general position line arrangement.
\item $b_1(M_N(\mathcal{L}))=0$ for any $N\geq 2$.
\item $b_1(M_N(\mathcal{L}))=0$ for some $N\geq 3$.
\end{itemize}
\end{The}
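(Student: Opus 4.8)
The plan is to reduce the computation of $b_1(M_N(\mathcal{L}))$ to a sum of twisted Betti numbers over the finite-order characters of $H_1(X,\ZZ)$, via the eigenspace decomposition under the Galois group $H:=H_1(X,\ZZ/N\ZZ)\simeq(\ZZ/N\ZZ)^{n-1}$. Writing $\widehat{H}$ for its character group, the cohomology of the desingularized branched cover splits as $H^1(M_N(\mathcal{L}),\CC)=\bigoplus_{\chi\in\widehat{H}}H^1(M_N(\mathcal{L}),\CC)_\chi$, and by the Sakuma--Hironaka dictionary each nontrivial eigenspace is identified with the twisted cohomology $H^1(X,\CC_\chi)$ of the rank-one local system $\CC_\chi$ on $X$ attached to $\chi$, subject to a ramification condition coming from the branch divisor $\mathcal{L}$ and from the resolution. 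Thus $b_1(M_N(\mathcal{L}))$ is governed by the degree-one characteristic variety $\{\chi:\dim H^1(X,\CC_\chi)\geq 1\}$ intersected with the $N$-torsion characters, weighted by the local dimension of $H^1(X,\CC_\chi)$, and the whole statement becomes a counting of torsion points on this variety.

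First I would treat the pencil $\mathcal{O}_n$ as a base case. Blowing up the common point turns $\mathbb{P}^2(\CC)$ into a $\mathbb{P}^1$-bundle over $\mathbb{P}^1(\CC)$ in which the $n$ lines become $n$ fibers, so the full abelian cover branched along $\mathcal{O}_n$ is birational to a surface ruled over the curve $C$ that is the $(\ZZ/N\ZZ)^{n-1}$-cover of $\mathbb{P}^1(\CC)$ totally ramified over the $n$ branch points (the monodromy around the exceptional curve is $\sum_i e_i=0$, so the cover is unramified there). Since $b_1$ is a birational invariant of smooth projective surfaces and a surface ruled over $C$ has $b_1=2g(C)$, we get $b_1(M_N(\mathcal{O}_n))=2g(C)$, and Riemann--Hurwitz (each of the $n$ points has $N^{n-2}$ preimages of ramification index $N$) gives $2g(C)=2+N^{n-2}\bigl((n-2)N-n\bigr)=b(N,n)$. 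This establishes the formula for $\mathcal{O}_n$ and exhibits the shape of the local contribution of a multiple point.

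For a general arrangement $\mathcal{L}$ the essential input is the structure of the characteristic variety as a finite union of torsion-translated subtori, each governed by a pencil, that is a surjection from $X$ onto an orbifold curve (Arapura, Libgober--Yuzvinsky). Two families of such pencils are combinatorially forced: each point of multiplicity $r\geq 3$ yields the pencil of lines through it, hence a map $X\to\mathbb{P}^1(\CC)\setminus\{r\text{ points}\}$ whose associated subtorus contributes, by localization to the base case above, the term $b(N,r)$; and each braid sub-arrangement yields an essential, non-local pencil of conics onto $\mathbb{P}^1(\CC)\setminus\{3\text{ points}\}$, contributing exactly like a triple point, namely $b(N,3)$. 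Summing over the $N$-torsion characters lying on these distinct components, and using that all twisted Betti numbers are nonnegative, yields the lower bound of \eqref{EqTayamaBound}; the inequality, as opposed to an equality, reflects that the characteristic variety may carry further essential components invisible to this combinatorial data.

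Finally I would establish the equivalence. If $\mathcal{L}$ is in general position it supports no pencil (no three lines are concurrent and there is no braid sub-arrangement), so its first characteristic variety is trivial and $H^1(X,\CC_\chi)=0$ for every $\chi\neq 1$; hence $b_1(M_N(\mathcal{L}))=0$ for all $N\geq 2$, and vanishing for some $N\geq 3$ is then immediate by taking $N=3$. For the converse I argue by contraposition: if $\mathcal{L}$ is not in general position there is a point of multiplicity $r\geq 3$, so $m_r\geq 1$, and since $b(N,r)\geq b(N,3)=(N-1)(N-2)>0$ for every $N\geq 3$, the lower bound forces $b_1(M_N(\mathcal{L}))>0$ for all $N\geq 3$, contradicting vanishing for some $N\geq 3$; the hypothesis $N\geq 3$ is essential here because $b(2,3)=0$, so a triple point is invisible at $N=2$. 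The crux, and the main obstacle, is the middle step: making the Sakuma--Hironaka dictionary precise for the \emph{desingularized branched} cover, matching each multiple point and each braid sub-arrangement to a genuine component of the characteristic variety carrying the asserted twisted cohomology, and verifying that the corresponding torsion characters contribute independently so that their dimensions add. The non-local braid contribution, which arises from a pencil of conics rather than from a single singular point, is the most delicate piece.
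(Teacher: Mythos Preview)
The paper does not prove this statement: it is quoted verbatim as \cite[Theorem~1.2]{Tayama} and used as a black box in the subsequent theorem about when $b_1(M_N(\mathcal{L}))=0$. There is therefore no proof in the paper to compare your proposal against.

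That said, your outline is a reasonable sketch of the circle of ideas behind Tayama's result. The eigenspace decomposition under the deck group, the identification of nontrivial eigenspaces with twisted cohomology $H^1(X,\CC_\chi)$ via the Sakuma--Hironaka formula, the Riemann--Hurwitz computation for the pencil $\mathcal{O}_n$, and the lower bound coming from the local and braid components of the characteristic variety are all standard ingredients in this area. Your derivation of the equivalence from the inequality \eqref{EqTayamaBound} and the observation that $b(N,3)=(N-1)(N-2)>0$ precisely when $N\geq 3$ is correct and is in fact the same arithmetic the paper uses immediately afterwards. The honest gaps you flag yourself---making the dictionary precise for the desingularized branched cover, and checking that the torsion characters on distinct pencil components contribute independently---are exactly where the real work lies, and your proposal does not resolve them; but neither does the paper, which simply defers to Tayama.
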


From this we deduce easily one necessary condition for having $b_1(M_N(\mathcal{L}))=0$
and in fact there is a converse.

\begin{The}
We have $b_1(M_N(\mathcal{L}))=0$ if and only if
\begin{itemize}
\item Either $\mathcal{L}$ is a general position line arrangement, and $N$ is any integer.
\item Either $N=2$ and $\mathcal{L}$ has at most triple points.
\end{itemize}
\end{The}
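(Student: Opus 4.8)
The plan is to prove both directions by analyzing Tayama's lower bound \eqref{EqTayamaBound} together with the contribution of each singular point. First I would recall the function $b(N,n)$ appearing in the theorem and establish its elementary positivity properties: for $N \geq 2$ and $n \geq 4$ one has $b(N,n) > 0$, while $b(N,3) = (N-1)(N-2)$, so $b(N,3) = 0$ exactly when $N = 2$ and $b(N,3) > 0$ for $N \geq 3$. This is a routine induction or direct estimate on the sum $\sum_{k=0}^{n-3} N^k = (N^{n-2}-1)/(N-1)$, from which $b(N,n) = (N-1)(n-2)N^{n-2} - 2(N^{n-2}-1)$, and one checks $(n-2)(N-1) \geq 2$ with equality only at $n=3, N=3$ — no wait, at $n=4,N=2$ as well, so more care is needed; I would simply tabulate the small cases and prove monotonicity in $n$.

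For the forward direction, suppose $b_1(M_N(\mathcal{L})) = 0$. By \eqref{EqTayamaBound} every term on the right-hand side must vanish: $\sum_{r \geq 3} m_r b(N,r) = 0$ and $\beta(\mathcal{L}) b(N,3) = 0$. If $\mathcal{L}$ has a point of multiplicity $r \geq 4$, then $m_r \geq 1$ and $b(N,r) > 0$ for all $N \geq 2$, a contradiction; hence $\mathcal{L}$ has at most triple points. If moreover $\mathcal{L}$ has a triple point, then either it is a general position arrangement (impossible if a triple point exists unless the arrangement is exactly $\mathcal{O}_3$, which one treats directly) or it contains a braid sub-arrangement, forcing $\beta(\mathcal{L}) \geq 1$; combined with $b(N,3) = 0$ this forces $N = 2$. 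The delicate point is the bookkeeping between "has a triple point" and "$\beta(\mathcal{L}) \geq 1$" — I would invoke the definition of braid sub-arrangement and note that three concurrent lines together with a fourth transverse line, or the relevant configuration, always produces the braid sub-count; the genuinely general position case ($N$ arbitrary) is precisely when $m_r = 0$ for all $r \geq 3$ and $\beta(\mathcal{L}) = 0$.

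For the converse, I would split into the two listed cases. If $\mathcal{L}$ is in general position, Tayama's equivalence (the three bulleted conditions) directly gives $b_1(M_N(\mathcal{L})) = 0$ for all $N \geq 2$, and for $N = 1$ the cover is trivial so $M_1(\mathcal{L}) = \overline{X}$ has $b_1 = 0$ as already observed. If $N = 2$ and $\mathcal{L}$ has at most triple points, the lower bound \eqref{EqTayamaBound} becomes vacuous since $b(2,3) = 0$ and there are no terms with $r \geq 4$; but a lower bound of $0$ does not by itself give $b_1 = 0$, so here I expect the main obstacle: one needs an \emph{upper} bound or an exact formula. I would resolve this by appealing to the known structure of double covers branched over a curve with only nodes and ordinary triple points — the desingularization $M_2(\mathcal{L})$ then fibers or is built from $\mathbb{P}^2$ by operations that do not raise $b_1$, or more directly one cites Hironaka's or Tayama's exact computation in the double-cover case showing equality holds in \eqref{EqTayamaBound}. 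That equality-in-the-double-cover-case statement is the crux, and I would make sure it is either in \cite{Tayama} or \cite{Hironaka} before asserting it; with it, $b_1(M_2(\mathcal{L})) = 0$ follows immediately.
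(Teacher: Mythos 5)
Your forward direction is sound and agrees in substance with the paper's: both rest on the positivity of $b(N,r)$ together with Tayama's inequality \eqref{EqTayamaBound}. Your route (force each term of the right-hand side to vanish, conclude $m_r=0$ for $r\geq 4$ from $b(N,r)>0$, and get $N=2$ from $m_3\geq 1$ together with $b(N,3)=(N-1)(N-2)$) is in fact cleaner than the detour through braid sub-arrangements you sketch --- you do not need $\beta(\mathcal{L})\geq 1$ at all, since a triple point already contributes the term $m_3\,b(N,3)$ to the first sum. The paper instead deduces $N=2$ from Tayama's equivalence (vanishing of $b_1$ for some $N\geq 3$ forces general position) and then uses positivity of $b(2,r)$ for $r>3$; both arguments are correct.

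The genuine gap is exactly where you suspect it: the converse in the case $N=2$ with at most triple points. Your proposed resolution --- that equality holds in \eqref{EqTayamaBound} for double covers, to be located in Hironaka's or Tayama's work --- is not established, and it is not what the paper does; the statement you hope for is not among the results quoted from Tayama, which give only the lower bound and the general-position equivalence. The paper closes this case by invoking an unpublished result of Eyssidieux (private communication) that $M_2(\mathcal{L})$ is in fact \emph{simply connected} whenever $\mathcal{L}$ has only double and triple points; this is strictly stronger than $b_1=0$ and is a genuinely geometric input about double covers branched along such arrangements, not a formal consequence of the bound. As written, your treatment of this direction is a plan to find a citation rather than an argument, so the converse remains unproved in your proposal.
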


\begin{proof}
The case of general position is already treated in Tayama's theorem.
Now suppose that $\mathcal{L}$ is not in general position and $b_1(M_N(\mathcal{L}))=0$.
Then $N=2$. As $b(3,2)=0$ and $b(3, r) > 0$ if $r >3$ it follows from the inequality
(\ref{EqTayamaBound}) that
we must have $m_r=0$ for $r>3$, that is $\mathcal{L}$ contains at most triple points.

For the converse, it is known (Eyssidieux \cite{Eyssidieux}, private communication)
that if $\mathcal{L}$ has at most triple points and $N=2$ then
$M_N(\mathcal{L})$ is simply connected. So $b_1(M_N(\mathcal{L}))=0$.
\end{proof}

\subsection{Criterion with respect to all finite representations}
As we have seen the case of arrangements is quite limited.
But we have other sources of interesting examples where we can apply our theorem
with respect to all representations with finite image.

\begin{Def}
A smooth complex quasi-projective variety $X$ is said to have property \PropP{} if
for all normal subgroup of finite index $H\subset\pi_1(X)$, the corresponding finite étale Galois
cover $\pi:Y\rightarrow X$ has a (smooth) compactification $\overline{Y}$ with
$b_1(\overline{Y})=0$.
\end{Def}

We will study this property and give two interesting classes of examples.

\subsubsection{Construction of varieties with property \PropP}

It is easy to see that if $X$ is a smooth projective variety with $\pi_1(X)$ finite, then $X$ has property \PropP. Indeed
a finite cover $Y$ a smooth projective variety corresponding to $H\subset\pi_1(H)$ is automatically a smooth projective variety
and recall that $\pi_1(Y)=H$.
Conversely it is known (by Serre) that every finite group is the fundamental group of some smooth projective variety.
It is also clear that if $X$ satisfies \PropP, then any finite étale Galois cover of $X$ satisfies \PropP.

\begin{The}
The variety $\CC^*$ has property \PropP.
\end{The}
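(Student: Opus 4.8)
The plan is to understand all finite étale covers of $\CC^*$ explicitly and check that each one admits a smooth compactification with vanishing first Betti number. The key point is that $\pi_1(\CC^*) \simeq \ZZ$, so its finite-index subgroups are exactly the $N\ZZ$ for $N \geq 1$, and the corresponding connected Galois cover $Y \to \CC^*$ is the $N$-th power map $z \mapsto z^N$, i.e. $Y \simeq \CC^*$ again. (One should also remark that $\ZZ$ being abelian, every finite-index subgroup is normal, so the hypothesis in the definition of property $\PropP$ is automatically about all of them.)

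First I would record that every connected finite étale cover of $\CC^*$ is isomorphic to $\CC^*$ via the map $z \mapsto z^N$. Then I would exhibit the compactification: $\CC^* \hookrightarrow \mathbb{P}^1(\CC)$, with $\overline{Y} = \mathbb{P}^1(\CC)$ and $\overline{Y} \setminus Y = \{0, \infty\}$ a normal crossings (in fact smooth) divisor. Since $b_1(\mathbb{P}^1(\CC)) = 0$, the condition $b_1(\overline{Y}) = 0$ holds. As noted in the paragraph before Lemma~\ref{LemCriteriaAlbanese}, $b_1$ is a birational invariant so it does not matter that I am not compactifying by a normal crossings divisor in general — here it happens to be one anyway. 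This verifies property $\PropP$ directly from the definition.

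**The main (and only) subtlety** is bookkeeping about connectedness: a finite étale cover of $\CC^*$ in the scheme-theoretic sense need not be connected, but the definition of property $\PropP$ only concerns the covers corresponding to finite-index subgroups $H \subset \pi_1(X)$, which are by construction connected; so there is nothing to worry about. One could alternatively phrase the whole argument as: $\CC^*$ is a smooth affine curve, any finite étale cover of it is again a smooth affine curve, and a smooth affine curve $Y$ has a unique smooth projective compactification $\overline{Y}$; for $Y \simeq \CC^*$ this is $\mathbb{P}^1$. I expect this theorem to be essentially immediate once the covers are identified, and it serves as the base case for the inductive/product constructions of varieties with property $\PropP$ that presumably follow (e.g. tori $(\CC^*)^n$, and then the abelian-variety and locally-symmetric examples announced in the introduction).
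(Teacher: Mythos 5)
Your proposal is correct and follows the same route as the paper: identify every connected finite cover of $\CC^*$ as $z\mapsto z^N$ from $\CC^*$ to itself, compactify to $\mathbb{P}^1(\CC)$, and observe $b_1(\mathbb{P}^1(\CC))=0$. The extra remarks on normality of subgroups of $\ZZ$ and on connectedness are harmless bookkeeping that the paper leaves implicit.
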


\begin{proof}
It is known that every $n$-sheeted cover of $\CC^*$ is of the form
$\CC^*\rightarrow \CC^*$, $z\mapsto z^n$. This can be compactified
in a ramified cover $\mathbb{P}^1\rightarrow \mathbb{P}^1$
over $\{0,\infty\}$ and $b_1(\mathbb{P}^1)=0$.
\end{proof}

So with the next theorem we will easily see that
for every finitely generated abelian group $G$, there is
a smooth quasi-projective variety $X$ with $\pi_1(X)\simeq G$
which has property \PropP.

\begin{The}
\label{TheProduct}
If $X_1$ and $X_2$ satisfy (P), then $X_1 \times X_2$ satisfies (P).
\end{The}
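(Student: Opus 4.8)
The plan is to route everything through the Albanese criterion of Lemma~\ref{LemCriteriaAlbanese}, which replaces the condition $b_1(\overline{Y})=0$ by the more flexible statement that $Y$ carries no nonconstant holomorphic map to an abelian variety; this lets us avoid having to compactify products or resolve quotient singularities. Write $G_i:=\pi_1(X_i)$, so that $\pi_1(X_1\times X_2)=G_1\times G_2$. Let $H\subset G_1\times G_2$ be a normal subgroup of finite index, with associated finite étale Galois cover $\pi:Y\to X_1\times X_2$; it is again smooth quasi-projective and admits some smooth compactification $\overline{Y}$. We must show $b_1(\overline{Y})=0$.

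First I would reduce to a product subgroup. Set $H_1:=\{g\in G_1\mid (g,1)\in H\}$ and $H_2:=\{g\in G_2\mid (1,g)\in H\}$. A short verification shows each $H_i$ is normal in $G_i$, and the injection $G_i/H_i\hookrightarrow (G_1\times G_2)/H$ shows it has finite index; moreover $H_1\times H_2\subset H$. Let $Y_i\to X_i$ be the finite étale Galois cover attached to $H_i$. Since $X_i$ has property~\PropP{}, $Y_i$ admits a smooth compactification with vanishing first Betti number, so by Lemma~\ref{LemCriteriaAlbanese} every holomorphic map from $Y_i$ to an abelian variety is constant. Since $H_1\times H_2\subset H$, the cover of $X_1\times X_2$ attached to $H_1\times H_2$ — which is exactly $Y_1\times Y_2$ — dominates $Y$: there is a finite étale, hence surjective, morphism $p:Y_1\times Y_2\to Y$ over $X_1\times X_2$.

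Now suppose for contradiction that $b_1(\overline{Y})\neq 0$. By Lemma~\ref{LemCriteriaAlbanese} there is a nonconstant holomorphic map $f:Y\to A$ to an abelian variety; composing with the surjection $p$ gives a nonconstant morphism $g:=f\circ p:Y_1\times Y_2\to A$. For each fixed $y_2\in Y_2$, the restriction $g(\cdot,y_2):Y_1\to A$ is a morphism to an abelian variety, hence constant by the previous paragraph. Fixing a base point $y_1^0\in Y_1$ and putting $h(y_2):=g(y_1^0,y_2)$, we get $g=h\circ\pr_2$, where $h:Y_2\to A$ is a morphism that is nonconstant because $g$ is and $\pr_2$ is surjective. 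This contradicts the fact that $Y_2$ admits no nonconstant map to an abelian variety. Hence $b_1(\overline{Y})=0$, and since $H$ was an arbitrary finite-index normal subgroup of $\pi_1(X_1\times X_2)$, the product $X_1\times X_2$ has property~\PropP{}.

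The only delicate points are bookkeeping ones: checking that $H_1,H_2$ are finite-index normal subgroups and that the covering-space dictionary does identify the cover attached to $H_1\times H_2$ with $Y_1\times Y_2$ (so that $p$ exists), and verifying that $g(\cdot,y_2)$ and $h$ are morphisms of algebraic varieties and not merely holomorphic maps — which is immediate since $f$ and $p$ are morphisms of quasi-projective varieties. No resolution of singularities or further Hodge-theoretic input is used beyond Lemma~\ref{LemCriteriaAlbanese}; in particular, the potential obstacle of controlling $b_1$ under passing to a quotient of $Y_1\times Y_2$ by a finite group and resolving is sidestepped entirely.
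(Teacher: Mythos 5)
Your proof is correct, and the group-theoretic reduction is exactly the paper's: you pass to the finite-index normal product subgroup $H_1\times H_2\subset H$ (the paper writes $U_i=H\cap\Gamma_i$) and use the product cover $Y_1\times Y_2$ dominating $Y$. The two arguments diverge only in the concluding step. The paper compactifies everything to obtain a chain of finite ramified covers $\overline{Y}_1\times\overline{Y}_2\to\overline{Y}\to\overline{X}_1\times\overline{X}_2$ and concludes from $b_1(\overline{Y}_1\times\overline{Y}_2)=0$ together with the injectivity of the pullback of holomorphic one-forms along the generically finite surjection onto $\overline{Y}$. You instead stay on the open varieties and route the conclusion through Lemma~\ref{LemCriteriaAlbanese}: a nonconstant map $Y\to A$ pulls back to $Y_1\times Y_2$, is constant on each $Y_1$-slice because $Y_1$ admits no nonconstant map to an abelian variety, hence factors through $\pr_2$ and yields a nonconstant map $Y_2\to A$, a contradiction. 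Your route buys a small simplification, since you never need to choose compactifications for which the covers extend to ramified covers, at the price of invoking the Albanese criterion three times; the paper's route is marginally more self-contained, needing only functoriality and injectivity of pullback of one-forms. Your closing worry about $g(\cdot,y_2)$ and $h$ being algebraic rather than merely holomorphic is unnecessary: Lemma~\ref{LemCriteriaAlbanese} is stated for holomorphic maps.
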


\begin{proof}
Denote by $\Gamma_i:=\pi_1(X_i)$ ($i=1,2$).
Let $H\subset \Gamma_1 \times \Gamma_2$ be a normal subgroup of finite index.
Put $U_i:=H\cap \Gamma_i$.
Then $U_i$ is a normal subgroup of finite index in $\Gamma_i$.
So the Galois cover $Y$ for $H$ lies under the finite Galois cover corrresponding to $U_1\times U_2$,
which is obtained as a product cover $Y_1\times Y_2$ and we have a sequence of finite étale Galois covers
\[ Y_1 \times Y_2 \longrightarrow Y \longrightarrow X_1\times X_2. \]
Taking compactifications this gives a sequence of finite ramified covers
\[ \overline{Y}_1 \times \overline{Y}_2 \longrightarrow 
\overline{Y} \longrightarrow \overline{X}_1\times \overline{X}_2. \]
By property (P) for $Y_i\rightarrow X_i$
we have that $b_1(\overline{Y}_i)=0$ and so $b_1(\overline{Y}_1\times\overline{Y}_2)=0$.
If we had $b_1(\overline{Y})>0$, there would be holomorphic one-forms on $\overline{Y}$ which could be pulled-back injectively
to $\overline{Y}_1\times\overline{Y}_2$, which is not possible.
So $b_1(\overline{Y})=0$ and this proves property (P) for $H$.
\end{proof}

We also have a twisted version, which may allow to construct new examples.

\begin{The}
If $X$ satisfies \PropP, then any (algebraic) principal $\CC^*$-bundle $P$ over $X$ satisfies \PropP.
\end{The}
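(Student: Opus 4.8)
The plan is to reduce any finite étale Galois cover of a principal $\CC^*$-bundle to the situation handled by Theorem~\ref{TheProduct} and the fact that $\CC^*$ has property \PropP{}, by passing to a further cover that trivializes the bundle. First I would set $\Gamma:=\pi_1(X)$ and recall that a principal $\CC^*$-bundle $P\to X$ (with $P$ assumed connected — if not, replace $X$ by the image of $\pi_1(P)$, which is a finite cover of $X$, hence still satisfies \PropP{}) fits into an exact sequence $1\to\pi_1(\CC^*)=\ZZ\to\pi_1(P)\to\Gamma\to 1$, the quotient map $p_*:\pi_1(P)\to\Gamma$ being induced by the projection $P\to X$. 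Given a normal subgroup $H\subset\pi_1(P)$ of finite index, I would form $H':=H\cap\ZZ$, a subgroup of finite index in $\ZZ$, say $H'=m\ZZ$.

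Next I would observe that the covering of $P$ corresponding to the preimage $p_*^{-1}(p_*(H))$ is pulled back from the covering $X'\to X$ corresponding to $p_*(H)\subset\Gamma$ (a normal subgroup of finite index, since $H$ is normal of finite index and $p_*$ is surjective); explicitly it is the total space of $p^*P'$ where $P'$ is the restriction of $P$ to $X'$, i.e. the principal $\CC^*$-bundle over $X'$ obtained by base change. By property \PropP{} for $X$, the cover $X'$ has a smooth compactification $\overline{X'}$ with $b_1(\overline{X'})=0$. Now the covering $Y\to P$ corresponding to $H$ itself sits between $P$ and the cover corresponding to $p_*^{-1}(p_*(H))$, and the latter in turn is covered by the fibrewise $m$-th power cover $z\mapsto z^m$ along the $\CC^*$-fibres, which is the total space of the bundle $(P')^{\otimes ?}$ — concretely a principal $\CC^*$-bundle $P''$ over $X'$ again. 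So one gets a tower of finite étale covers
\[ P'' \longrightarrow Y \longrightarrow P, \]
with $P''$ a principal $\CC^*$-bundle over a variety $X'$ enjoying \PropP{}.

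It then remains to produce a smooth compactification $\overline{P''}$ with $b_1(\overline{P''})=0$ and conclude, by the now-familiar one-form pull-back argument, that $b_1(\overline{Y})=0$. For the compactification I would use that an algebraic principal $\CC^*$-bundle over $X'$ is the complement of the zero and infinity sections in a $\mathbb{P}^1$-bundle (the projective completion of the associated line bundle), which extends to a $\mathbb{P}^1$-bundle over $\overline{X'}$; removing the two sections over $\overline{X'}\setminus X'$ and resolving, $\overline{P''}$ is birational to a $\mathbb{P}^1$-bundle over $\overline{X'}$, hence $b_1(\overline{P''})=b_1(\overline{X'})=0$ since $b_1$ is a birational invariant and is additive over $\mathbb{P}^1$-bundles (Leray–Hirsch, the $\mathbb{P}^1$ factor contributing nothing to $H^1$). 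Finally, since $P''\to Y$ is a finite surjective morphism of smooth quasi-projective varieties, holomorphic $1$-forms on a smooth compactification $\overline{Y}$ pull back injectively to $\overline{P''}$ (compose with a resolution dominating both), so $q(\overline{Y})\le q(\overline{P''})=0$, whence $b_1(\overline{Y})=0$; this is property \PropP{} for $P$.

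The main obstacle I expect is the bookkeeping in the second paragraph: identifying the intermediate cover $p_*^{-1}(p_*(H))$ with a base change of $P$, and checking that the residual cover (after quotienting by $\ZZ$) is again fibrewise an $m$-th power cover and hence again a principal $\CC^*$-bundle over $X'$ — this uses that $H\supset H'=m\ZZ$ is normal, so the $\ZZ/m\ZZ$ it cuts out along the fibre is compatible with the monodromy of $P'$. Once that tower is in place, the geometry (compactifying $\CC^*$-bundles by $\mathbb{P}^1$-bundles) and the one-form argument are routine and exactly parallel to the proof of Theorem~\ref{TheProduct}.
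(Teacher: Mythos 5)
Your overall strategy is viable and genuinely different from the paper's (you compactify explicitly via $\mathbb{P}^1$-bundles and compute $b_1$; the paper never compactifies $Q$ at all), but the step you yourself flagged as the main obstacle does fail as stated: the tower $P''\to Y\to P$ does not exist in general. Write $K:=p_*^{-1}(p_*(H))$, so $P':=\pi^*P$ is the cover of $P$ corresponding to $K$, and let $M\subset K$ be the subgroup corresponding to a fibrewise $m$-th power cover $P''$ of $P'$. Both $H$ and $M$ have index $m$ in $K$ (each meets the central fibre class $\ZZ$ in $m\ZZ$ and surjects onto $K/\ZZ=p_*(H)$), so a covering $P''\to Y$ over $P'$ would force $M=H$. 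But $H/m\ZZ$ and $M/m\ZZ$ are two splittings of the central extension of $p_*(H)$ by $\ZZ/m\ZZ$, and they differ by an arbitrary homomorphism $p_*(H)\to\ZZ/m\ZZ$; normality of $H$ does not kill this character. Concretely, take $X=\CC^*$, $P=\CC^*\times\CC^*$ trivial, and $H=\{(a,b)\in\ZZ^2:\ a\equiv b\bmod 2\}$: then $X'=X$, $P'=P$, the cover $Y\to P$ is $(s,t)\mapsto(s,st^2)$ while $P''\to P$ is $(z,w)\mapsto(z,w^2)$, and a map $P''\to Y$ over $P$ would require extracting $\sqrt{z}$ on $\CC^*$. (There is also a direction slip earlier: since $H\subset K$, it is $P'$ that sits between $Y$ and $P$, not $Y$ between $P$ and $P'$.)

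The gap is repairable, and the rest of your argument (extending the line bundle to $\overline{X'}$, compactifying by a $\mathbb{P}^1$-bundle, Leray--Hirsch, birational invariance of the irregularity, injectivity of pullback of $1$-forms under a dominant map) is sound. Two fixes: (i) show that $Y\to X'$ is itself a principal $\CC^*$-bundle — over a small $U\subset X'$ the degree-$m$ cover $Y|_U\to P'|_U\cong U\times\CC^*$ is fibrewise $w\mapsto w^m$, so $Y|_U\cong U\times\CC^*$ and the transition maps are multiplication by functions; then run your compactification argument directly on $Y$, with no need for $P''$ nor the final pullback step. (ii) Follow the paper: it only records that $Y$ ($Q$ in its notation) is a fibre bundle with fibre $\CC^*$ over $X'$, seen via the finite étale map $Y\to\pi^*P$ over $X'$, with no claim that it is the power cover; it then avoids compactifying $Y$ altogether by Lemma~\ref{LemCriteriaAlbanese}: any $f:Y\to A$ to an abelian variety is constant on each $\CC^*$-fibre (extend over $\mathbb{P}^1$ by rigidity; maps $\mathbb{P}^1\to A$ are constant), hence factors through $X'$, hence is constant since $b_1(\overline{X'})=0$. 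The paper's route is shorter precisely because it never needs $Y$ to be principal nor an explicit $\overline{Y}$.
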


\begin{proof}
First we show that a finite étale Galois cover of $P$ is a bundle $Q$ with fiber $\CC^*$
over some finite étale Galois cover $\pi:Y\rightarrow X$.
Let $\tau:Q\rightarrow X$ be such a cover, 
with $H:=\tau_*\pi_1(Q)\subset\pi_1(X)$ a normal subgroup of finite index.
Since $p:P\rightarrow X$ is a bundle, the induced map $p_*:\pi_1(P)\rightarrow\pi_1(X)$
is surjective, so $p_*(H)$ is a normal subgroup of finite index in $\pi_1(X)$. This corresponds to
a finite étale Galois cover $\pi:Y\rightarrow X$
with $\pi_*\pi_1(Y)=p_* H$.
\[ \xymatrix{
Q \ar[r]^\tau \ar@{-->}[d]_q & P \ar[d]^{p} \\
Y \ar[r]_{\pi} & X } \]
Now by construction $(p\circ\tau)_*\pi_1(Q)=\pi_*\pi_1(Y)$
which means there is a lifting of $p\circ\tau:Q\rightarrow X$ to a map $q:Q\rightarrow Y$.
All theses spaces and maps can be taken to be algebraic and $Q$ is a bundle
whose general fiber is a finite cover of $\CC^*$, that is $\CC^*$:
we can see this by first looking at $\pi^* P$, which is a principal $\CC^*$-bundle
over $Y$, then the induced map $Q\rightarrow\pi^* P$ over $Y$ which is finite étale.

Now we want to apply lemma~\ref{LemCriteriaAlbanese}.
Let $A$ be an abelian variety and $f:Q\rightarrow A$.
Restricted to each fiber, $f$ is a map $\CC^*\rightarrow A$.
By rigidity it extends to $\mathbb{P}^1(\CC)$, but a map
$\mathbb{P}^1(\CC)\rightarrow A$ is constant.
So $f$ is constant on each fiber and thus is determined by its restriction to $Y$.
But since $X$ has \PropP{} this one is constant so $f$ is globally constant,
which proves $b_1(\overline{Q})=0$.
\end{proof}

\subsubsection{Families of complex tori}
\label{SecAbelianVarieties}
Our motivation for studying complex tori is the use of the Albanese variety as
in lemma~\ref{LemCriteriaAlbanese} and the various rigidity lemmas
for abelian varieties and families.

Let $X$ be a smooth quasi-projective variety.
We would like to prove, using these lemmas, that if $E\rightarrow X$ is a family of abelian varieties over a base
that satisfies \PropP, then $E$ satisfies \PropP.
Of course the family must not contain a constant factor.
However there is some technical difficulty coming from the fact that a finite cover of a family of abelian varieties
may not be a family of abelian varieties, because of the lack of a zero section.
Thus we have to work with complex tori.

\begin{Def}
A \emph{family of complex tori} is a smooth quasi-projective variety $E$
with a smooth projective morphism $P\rightarrow X$ such that all fibers
are isomorphic to complex tori.
In this case each fiber is an abelian variety.
However by a \emph{family of abelian varieties} we mean the data of a family of complex
tori $E\rightarrow X$ with a section, called the \emph{zero section}.
\end{Def}

When $E\rightarrow X$ is a family of abelian varieties,
then there are global maps $E\times E\rightarrow E$ (addition)
and $E\rightarrow E$ (inverse) over $X$.
Many definitions and results from abelian varieties carry over directly to families.
A \emph{morphism} of families of abelian varieties must preserve the zero section.
An \emph{isogeny} is a sujrective morphism with finite fibers. A \emph{polarization}
is a global holomorphic $2$-form on $E$ that induces a polarization on each fiber.
A \emph{factor} of $E\rightarrow X$ is a sub-family $F$ such that there is another sub-family $G$
with addition $F\times G\rightarrow E$ being an isomorphism.

\begin{Def}
A family of abelian varieties $E\rightarrow X$ is called \emph{almost constant}
if it becomes constant after a finite étale base change.
\end{Def}

Attached to each family $P\rightarrow X$ of complex tori,
there is a family $\Alb(P/X)$ of polarized abelian varieties over $X$ called the
\emph{relative Albanese variety}, which on each fiber corresponds to the Albanese variety,
and there is a morphism $P\rightarrow \Alb(P/X)$ over $X$.

\begin{The}
Suppose that $X$ satisfies \PropP{} and let $P\rightarrow X$
be a family of complex tori.
Assume that, up to isogeny, $\Alb(P/X)$ has no non-trivial almost constant factor.
Then $P$ has property \PropP.
\end{The}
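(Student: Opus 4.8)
The plan is to reproduce the argument given above for principal $\CC^*$-bundles, replacing the appeal to rigidity of maps out of $\mathbb{P}^1$ by the rigidity and semisimplicity properties of families of abelian varieties, and feeding in the hypothesis on $\Alb(P/X)$ at the right moment. By lemma~\ref{LemCriteriaAlbanese} it suffices to show that for every finite étale Galois cover $\tau\colon Q\rightarrow P$ there is no nonconstant holomorphic map $f\colon Q\rightarrow A$ to an abelian variety. \textbf{Reduction to a family over a cover of $X$.} Since $P\rightarrow X$ is smooth and projective with connected fibres it is a $\Cinf$ fibre bundle, so $p_*\colon\pi_1(P)\rightarrow\pi_1(X)$ is surjective; hence $p_*\tau_*\pi_1(Q)$ is a normal subgroup of finite index in $\pi_1(X)$ and cuts out a finite étale Galois cover $\pi\colon Y\rightarrow X$. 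Exactly as in the $\CC^*$-bundle case, $p\circ\tau$ lifts to $q\colon Q\rightarrow Y$, the map $Q\rightarrow\pi^*P$ is finite étale over $Y$, and, since a connected finite étale cover of a complex torus is again a complex torus, $q\colon Q\rightarrow Y$ is a family of complex tori, each fibre being compact and quasi-projective, hence an abelian variety. Moreover $Y$ satisfies~\PropP{}, being a finite étale Galois cover of $X$.

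\textbf{Descent of the hypothesis to $Q\rightarrow Y$.} As $Q\rightarrow\pi^*P$ is an isogeny on each fibre and the relative Albanese is compatible with finite étale base change, $\Alb(Q/Y)$ is isogenous to $\pi^*\Alb(P/X)$, and I claim it still has no non-trivial almost constant factor up to isogeny. Translating into Hodge theory, let $\mathbb V$ be the polarizable variation of Hodge structure attached to $\Alb(P/X)$, so that factors up to isogeny correspond to sub-variations and almost constant ones to sub-variations with finite monodromy. Sums of finite-monodromy sub-variations again have finite monodromy, so $\pi^*\mathbb V$ has a largest such sub-variation; being canonical it is stable under $\mathrm{Gal}(Y/X)$, hence descends to a sub-variation $\mathbb W\subset\mathbb V$ on $X$. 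Since $\pi_1(Y)$ has finite index in $\pi_1(X)$ and acts on $\mathbb W$ with finite image, so does $\pi_1(X)$; thus $\mathbb W$ is an almost constant factor of $\Alb(P/X)$, which forces $\mathbb W=0$ and therefore $\pi^*\mathbb W=0$.

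\textbf{Rigidity.} Let $f\colon Q\rightarrow A$ be holomorphic. For each $y$ the map induced on first homology by $f|_{Q_y}$ is independent of any choice of origin, and, as $f$ is globally defined and holomorphic, these assemble into a morphism of variations of Hodge structure $\psi\colon R_1q_*\QQ\rightarrow\underline{H_1(A,\QQ)}$. Its image is a sub-variation of a constant variation, hence has trivial monodromy, so equals $\underline{H_1(A',\QQ)}$ for a sub-abelian-variety $A'\subset A$ (up to isogeny); by semisimplicity of polarizable variations (Poincaré reducibility) $R_1q_*\QQ$ then contains a sub-variation isogenous to $\underline{H_1(A',\QQ)}$, i.e.\ an almost constant factor of $\Alb(Q/Y)$, which by the previous step is trivial. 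Hence $\psi=0$, so each $f|_{Q_y}$ induces the zero map on $\pi_1$ and is therefore constant (it lifts to the universal cover $\CC^{\dim A}$ of $A$, and $Q_y$ is compact). Consequently $f$ is constant along the fibres of $q$, so $f=\bar f\circ q$ with $\bar f\colon Y\rightarrow A$ holomorphic; since $Y$ has~\PropP{} we have $b_1(\overline Y)=0$, and lemma~\ref{LemCriteriaAlbanese} forces $\bar f$, and hence $f$, to be constant.

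\textbf{Main obstacle.} The main obstacle I expect is the bookkeeping between the "almost constant factor" hypothesis and the Hodge-theoretic machinery: identifying almost constant factors, up to isogeny, with finite-monodromy sub-variations of the attached polarizable variation of Hodge structure; checking that this property is inherited under the finite étale pullback~$\pi$ (the descent of the second step, where the canonicity of the maximal finite-monodromy part is what makes it $\mathrm{Gal}(Y/X)$-equivariant); and making rigorous the passage from an arbitrary holomorphic $f$ to an honest morphism of variations of Hodge structure whose vanishing is exactly the statement that $f$ is constant along the fibres.
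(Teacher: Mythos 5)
Your proposal is correct and its skeleton coincides with the paper's: reduce to a family of complex tori $q\colon Q\rightarrow Y$ over a finite étale Galois cover $Y$ of $X$ (identical to the paper's first step), show that any holomorphic $f\colon Q\rightarrow A$ kills the fibres because otherwise the relative Albanese would acquire an almost constant factor excluded by the hypothesis, then descend $f$ to $Y$ and use property \PropP. Where you genuinely diverge is in the implementation of the middle step. The paper stays with the geometric objects: $f$ induces $g\colon E:=\Alb(Q/Y)\rightarrow A\times Y$, the injection $E/\Ker(g)\hookrightarrow A\times Y$ forces $E/\Ker(g)$ to be constant by rigidity of abelian schemes (\cite[Proposition~16.3]{MilneAV}), Poincaré reducibility for families produces a complementary factor $F$ isogenous to $E/\Ker(g)$, and functoriality of the relative Albanese pushes $F$ \emph{down} to an almost constant factor of $\Alb(P/X)$, which must vanish. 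You instead translate everything into polarizable variations of Hodge structure: the morphism $\psi\colon R_1q_*\QQ\rightarrow\underline{H_1(A,\QQ)}$, trivial monodromy of its image, semisimplicity to split off a constant factor of $\Alb(Q/Y)$, and --- in place of pushing the factor down to $X$ --- a separate descent lemma showing that the absence of almost constant factors is inherited by $\pi^*\Alb(P/X)$ via Galois-stability of the maximal finite-monodromy sub-variation. Both routes rest on the same two pillars (rigidity and Poincaré reducibility), but yours makes explicit two points the paper leaves implicit: the dictionary between almost constant factors up to isogeny and finite-monodromy sub-variations (which needs the theorem of the fixed part to pass from trivial monodromy back to a constant family), and the final step that $\bar f\colon Y\rightarrow A$ is constant because $Y$, as a finite étale Galois cover of $X$, again satisfies \PropP{} --- the paper's closing ``therefore $f$ is constant'' silently uses this.
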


\begin{proof}
First, exactly as in the case of principal $\CC^*$-bundles,
a finite étale Galois cover of $P$ is a bundle $Q$
over some finite étale Galois cover $Y$ of $X$, whose general fiber is a finite cover of a complex torus, that is
$Q$ is a family of complex tori.
\[ \xymatrix{
Q \ar[r]^\tau \ar@{-->}[d]_q & P \ar[d]^{p} \\
Y \ar[r]_{\pi} & X } \]
We want to apply lemma~\ref{LemCriteriaAlbanese}. Let $A$ be an abelian variety
and $f:Q\rightarrow A$. Let $E:=\Alb(Q/Y)$.
Then $f$ factors through $E$ and induces a morphism of families of abelian varieties $g:E\rightarrow A\times Y$
and an injective morphism $E/\Ker(g)\hookrightarrow A\times Y$.
By rigidity (see \cite[Proposition~16.3]{MilneAV}) this implies that
$E/\Ker(g)$ is a constant family.
But with the polarization on $E$ and by Poincaré's reductibility theorem for families,
it is possible fo find a family $F$ over $Y$ such that addition $\Ker(g)\times F\rightarrow E$
is an isogeny, and $E/\Ker(g)$ is isogenous to $F$.

By functoriality of the relative Albanese construction, $F$
will project to a constant factor of $\Alb(\pi^*P / Y)$
and this corresponds to an almost constant factor of $\Alb(P/X)$.
But by our hypothesis there is no non-trivial such factor, thus $F$ is trivial
which means $g$, and therefore $f$, is constant
and $b_1(\overline{Q})=0$.
\end{proof}

\subsubsection{Symmetric spaces}
\label{SecSymmetricSpaces}
Our motivation is now the rigidity theorems
for hermitian locally symmetric spaces and for lattices in Lie groups,
which translate into the vanishing of the first Betti number.

\begin{The}
Let $\Omega=G/K$ be an irreducible hermitian symmetric space of noncompact type, where $G$ is a simple Lie group
of rank greater than $2$, $K$ is a maximal compact subgroup, and let $\Gamma\subset G$ be a torsion-free lattice.
Then $X:=\Gamma\setminus\Omega$ has property (P).
\end{The}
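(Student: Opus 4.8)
The plan is to reduce, via the property \PropP{} machinery already developed, to a statement about vanishing of the first Betti number of finite étale covers of $X=\Gamma\backslash\Omega$. By definition of \PropP, I must show: for every normal subgroup $H\subset\pi_1(X)$ of finite index, the associated finite étale Galois cover $Y\to X$ admits a smooth compactification $\overline{Y}$ with $b_1(\overline{Y})=0$. Since $X$ is a torsion-free locally symmetric space $\Gamma\backslash\Omega$, its fundamental group is $\Gamma$ itself (as $\Omega$ is contractible), so $H$ corresponds to a torsion-free finite-index subgroup $\Gamma'\subset\Gamma$, which is again a lattice in $G$, and the cover is $Y=\Gamma'\backslash\Omega$. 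By the Baily--Borel and toroidal compactification theory, $Y$ is a smooth quasi-projective variety admitting a smooth projective compactification $\overline{Y}$. Because $b_1$ is a birational invariant (as remarked in the excerpt after Lemma~\ref{LemCriteriaAlbanese}), it suffices to control $b_1$ of any one smooth compactification.

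First I would invoke the characterizations of $b_1(\overline{Y})=0$ listed in the excerpt: it is equivalent to $H^1(Y)$ being a pure Hodge structure of weight $2$, equivalently to $q(\overline{Y})=0$, equivalently to the nonexistence of a nonconstant holomorphic map from $Y$ to an abelian variety (Lemma~\ref{LemCriteriaAlbanese}). I would choose to work with the condition $H^1(\overline{Y},\QQ)=0$ directly. Now the key input is rigidity: since $\Omega$ is an irreducible hermitian symmetric space of noncompact type attached to a \emph{simple} Lie group $G$ of real rank $\geq 2$, the lattice $\Gamma'$ satisfies Kazhdan's property (T) (by the Kazhdan/Delorme--Borel--Wallach results for higher rank simple groups, or alternatively Matsushima's vanishing theorem applies directly to the $L^2$-cohomology in degree one). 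Property (T) for $\Gamma'$ immediately gives $H^1(\Gamma',\RR)=0$, hence $H^1(Y,\RR)=H^1(\Gamma',\RR)=0$ since $Y=\Gamma'\backslash\Omega$ is a $K(\Gamma',1)$.

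The last step is to transfer the vanishing of $H^1(Y)$ (an open variety) to the vanishing of $b_1(\overline{Y})$. This uses the Deligne weight structure: $H^1(\overline{Y})\hookrightarrow H^1(Y)$ is the weight-$1$ part of the mixed Hodge structure on $H^1(Y)$ (equivalently $W_1 H^1(Y)=H^1(\overline{Y})$, and this is the image of the restriction map $H^1(\overline{Y},\QQ)\to H^1(Y,\QQ)$, which is injective). Therefore $H^1(Y,\QQ)=0$ forces $H^1(\overline{Y},\QQ)=0$, i.e.\ $b_1(\overline{Y})=0$. Having verified this for an arbitrary finite-index $H$, property \PropP{} for $X$ follows, and then our main theorem applies to \emph{every} representation of $\pi_1(X)$ with finite image.

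The main obstacle is the rigidity/vanishing input in the middle paragraph: one must be careful about exactly which hypothesis on $G$ is needed. Property (T) requires that $G$ have no simple factor locally isomorphic to $\mathrm{SO}(n,1)$ or $\mathrm{SU}(n,1)$; since we are in the \emph{hermitian} case, $\mathrm{SU}(n,1)$ is a priori allowed, so property (T) alone is not automatic and one should instead appeal to Matsushima's formula, which in the irreducible hermitian case of rank $\geq 2$ still yields $H^1(\Gamma',\CC)=0$ (the relevant $(\gg,K)$-cohomology group vanishes because the first cohomology of the compact dual is zero outside the reducible $\mathrm{SL}_2$-type situations). Alternatively, in rank $\geq 2$ one may use Margulis superrigidity to rule out nonconstant maps to abelian varieties directly. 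I would state the vanishing as citing Matsushima--Murakami / Borel--Wallach, and flag that the rank $\geq 2$ hypothesis is exactly what makes the degree-one cohomology vanish.
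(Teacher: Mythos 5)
Your proposal is correct and follows essentially the same route as the paper: identify the finite \'etale cover with $\Gamma'\backslash\Omega$ for a finite-index subgroup $\Gamma'\subset\Gamma$ (still a torsion-free lattice), invoke Kazhdan's property (T) to get $b_1(Y)=b_1(\Gamma')=0$, and transfer this to a smooth compactification $\overline{Y}$ --- the paper does the last step via surjectivity of $\pi_1(Y)\rightarrow\pi_1(\overline{Y})$ while you use injectivity of $H^1(\overline{Y})\rightarrow H^1(Y)$ coming from the weight filtration, and both are fine. One clarification: your hedge about property (T) is unnecessary, since $\mathrm{SU}(n,1)$ has real rank $1$ and is therefore already excluded by the rank hypothesis; every simple Lie group of real rank at least $2$ has property (T), which is exactly the input the paper cites.
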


\begin{proof}
First it is known that $\Omega$ is simply connected
and that $X$ is a smooth quasi-projective variety,
see the Baily-Borel compactification (for example \cite{BorelJi}).
A finite étale Galois cover $Y$ of $X$ is a quotient $\Gamma'\setminus\Omega$ where $\Gamma'\subset\Gamma$ has finite index,
so $\Gamma'$ is still a torsion-free lattice in $G$ and is the fundamental group of $Y$.
Under our hypothesis it is known
by the results of Kazhdan (see \cite[p.~12]{BekkaDeLaHarpeValette}) that $G$ has property (T),
and so do $\Gamma$, $\Gamma'$. This implies that $b_1(\Gamma')=0$ and this is also $b_1(Y)$.

In case $X$ (and $Y$) is compact, we are done.
Else we take any smooth compactification $\overline{Y}$
(which may not be the Baily-Borel compactification since this one is usually not smooth)
and the natural morphism $\Gamma'=\pi_1(Y)\rightarrow \pi_1(\overline{Y})$ is surjective.
This implies that $\pi_1(\overline{Y})^{\text{ab}}$ is finite and so $b_1(\overline{Y})=0$.
\end{proof}

\nocite{PetersSteenbrink}
\nocite{ABCKT}
\bibliographystyle{alpha}
\bibliography{bibliographie}

\bigskip

\noindent Louis-Clément Lefèvre\\
Univ. Grenoble Alpes, IF, F-38000 Grenoble, France\\
CNRS, IF, F-38000 Grenoble, France\\
e-mail: \url{louisclement.lefevre@ujf-grenoble.fr}

\end{document}